\documentclass[a4paper, 12pt]{article}

\usepackage[utf8]{inputenc}
\usepackage[T1]{fontenc}
\usepackage[a4paper, margin=2.5cm]{geometry}

\usepackage{libertine} 
\usepackage{inconsolata} 

\usepackage{textpos}

\usepackage{amsmath, amsthm, amssymb}
\usepackage{graphicx}
\usepackage{enumerate}
\usepackage{enumitem}
\usepackage{authblk}
\usepackage[textsize=footnotesize, textwidth=2cm, color=yellow]{todonotes}
\usepackage{thmtools}
\usepackage{thm-restate}
\usepackage[colorlinks=true, citecolor=red]{hyperref}
\usepackage{float}
\usepackage[nameinlink]{cleveref}

\usepackage{mathtools}
\usepackage{xfrac}


\renewenvironment{abstract}
{\small\vspace{-1em}
\begin{center}
\bfseries\abstractname\vspace{-.5em}\vspace{0pt}
\end{center}
\list{}{
\setlength{\leftmargin}{0.6in}%
\setlength{\rightmargin}{\leftmargin}}%
\item\relax}
{\endlist}

\declaretheorem[name=Theorem, numberwithin=section]{theorem}
\declaretheorem[name=Lemma, sibling=theorem]{lemma}
\declaretheorem[name=Proposition, sibling=theorem]{proposition}
\declaretheorem[name=Definition, sibling=theorem]{definition}
\declaretheorem[name=Corollary, sibling=theorem]{corollary}

\declaretheorem[name=Claim, sibling=theorem]{claim}

%

\def\cqedsymbol{\ifmmode$\lrcorner$\else{\unskip\nobreak\hfil
\penalty50\hskip1em\null\nobreak\hfil$\lrcorner$
\parfillskip=0pt\finalhyphendemerits=0\endgraf}\fi}
\newcommand{\cqed}{\renewcommand{\qed}{\cqedsymbol}}


\interfootnotelinepenalty=10000


\def\B{\mathcal{B}}
\def\C{\mathcal{C}}
\def\F{\mathcal{F}}
\def\M{\mathcal{M}}
\def\Ll{\mathcal{L}}
\def\Pp{\mathcal{P}}
\def\X{\mathcal{X}}

\def\S{\mathcal{S}}
\def\T{\mathcal{T}}


\DeclareMathOperator{\adh}{adh}

\DeclareMathOperator{\adhw}{\mathbf{aw}}
\DeclareMathOperator{\bagw}{\mathbf{bw}}

\DeclareMathOperator{\torso}{\mathsf{torso}}

\DeclareMathOperator{\tcc}{3CC}

 \newcommand{\trace}{\mathsf{trace}}

\newcommand{\wh}[1]{\widehat{#1}}

\renewcommand{\leq}{\leqslant}
\renewcommand{\geq}{\geqslant}

\newcommand{\N}{\mathbb{N}}

\newcommand{\quo}[2]{\sfrac{#1}{#2}}

\thickmuskip=5mu plus 1mu minus 2mu

\title{On objects dual to tree-cut decompositions\thanks{ This work is 
a part of projects CUTACOMBS (ŁB, OD, KO) and TOTAL (MP) that have received funding from the European Research Council (ERC) 
under the European Union's Horizon 2020 research and innovation programme (grant agreements No.~714704 and No.~677651, respectively).
}}

\author[1]{\L{}ukasz Bo\.zyk}
\author[1,2]{Oscar Defrain}
\author[1]{\\Karolina Okrasa}
\author[1]{Micha\l{} Pilipczuk}
\affil[1]{Institute of Informatics, University of Warsaw, Poland}
\affil[2]{LIS, Aix-Marseille Universit\'e, France}
\date{26 March, 2021}

\begin{document}

\maketitle

\begin{textblock}{20}(-1.9, 7.7)
\includegraphics[width=40px]{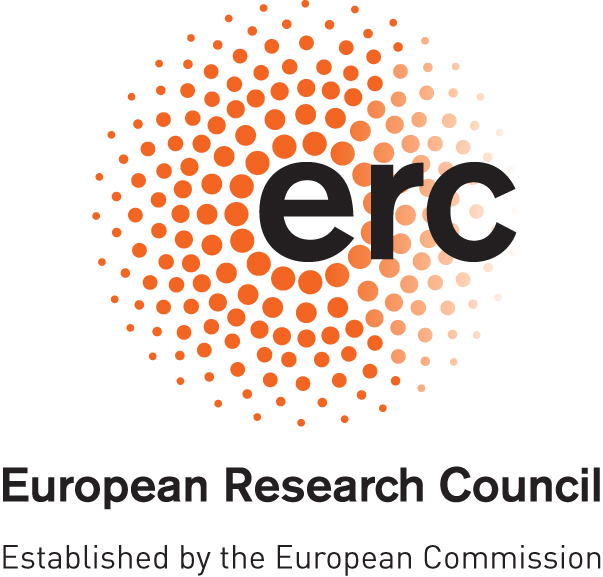}%
\end{textblock}
\begin{textblock}{20}(-2.15, 8.0)
\includegraphics[width=60px]{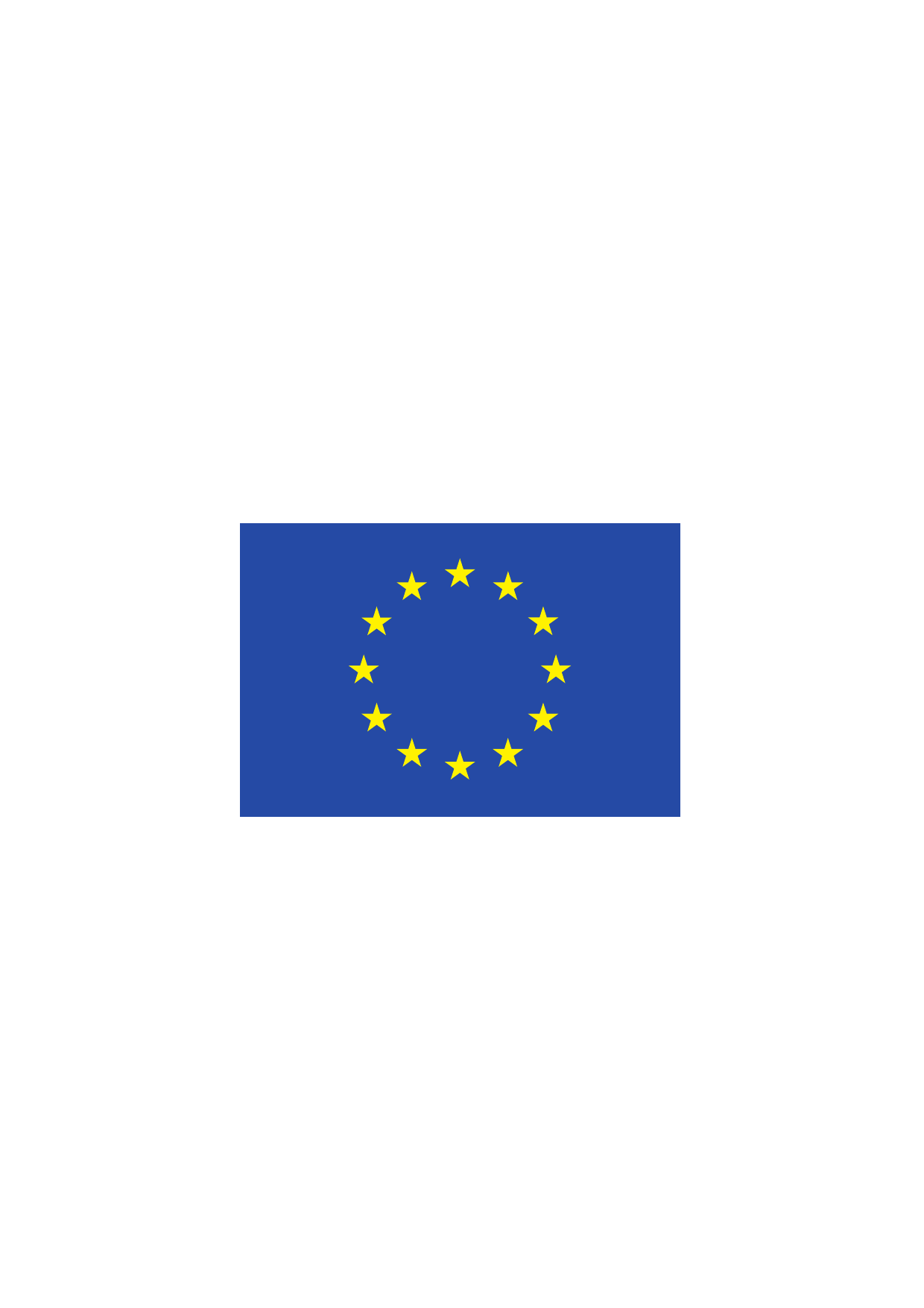}%
\end{textblock}

\begin{abstract}
Tree-cut width is a graph parameter introduced by Wollan~\cite{wollan2015structure} that is an analogue of treewidth for the immersion order on graphs in the following sense: the tree-cut width of a graph is functionally equivalent to the largest size of a wall that can be found in it as an immersion. In this work we propose a variant of the definition of tree-cut width that is functionally equivalent to the original one, but for which we can state and prove a tight duality theorem relating it to naturally defined dual objects: appropriately defined brambles and tangles. Using this result we also propose a game characterization of tree-cut width.

\noindent{\textbf{Keywords :} immersions, tree-cut decompositions, brambles, tangles, cops and robber game}
\end{abstract}

\section{Introduction}\label{sec:intro}

A graph $H$ is an {\em{immersion}} of a graph $G$ if one can injectively map vertices of $H$ to vertices of $G$ and edges of $H$ to pairwise edge-disjoint paths in $G$ so that the image of every edge of $H$ connects the images of its endpoints. Immersibility is a natural embedding notion for graphs that, compared to the concept of minors, focuses on edge-disjointness rather than on vertex-disjointness. Similarly to the minor order, the immersion order is a well-quasi-ordering on all graphs~\cite{robertson2010graph}.
This suggests that a sound structural theory can be built around this concept.

And indeed, in~\cite{wollan2015structure} Wollan introduced the graph parameter {\em{tree-cut width}} which can be regarded as the analogue of treewidth for immersions. More precisely, as shown in~\cite{wollan2015structure}, tree-cut width admits a duality theorem of the following form: tree-cut width of a graph $G$ is functionally equivalent to the largest size of a wall that can be immersed in $G$, in the sense that each of these quantities can be bounded by a function of the other. This implies that a class of graphs $\C$ has a uniformly bounded tree-cut width if and only if all the graphs from $\C$ exclude some fixed subcubic planar graph as an immersion. The wall immersion theorem of Wollan is a fundamental structural connection that mirrors the grid minor theorem from the setting of minors and treewidth~\cite{robertson1986graph}.

The definition of tree-cut width is based on the notion of a {\em{tree-cut decomposition}}. A tree-cut decomposition of a graph $G$ is just a tree $T$ where each node $t$ is associated with a {\em{bag}} $X_t\subseteq V(G)$ so that the bags are pairwise disjoint and together cover the whole vertex set. Note that we allow bags to be empty. Thus, every edge $e$ of $T$ naturally corresponds to a partition of $V(G)$ into two parts, respectively consisting of vertices residing in bags of the two connected components of $T-e$. The set of edges of $G$ crossing this partition is called the {\em{adhesion}} of $e$. Roughly speaking, a tree-cut decomposition as above has a bounded width if all of the following quantities are bounded:
\begin{itemize}[nosep]
 \item the size of each bag;
 \item the size of each adhesion; and
 \item the degree of each node in $T$.
\end{itemize}
However, in the last point there is a technical caveat. When calculating the degree of a node of $t$ in $T$, we only consider incident edges that have {\em{bold}} adhesions, that is, adhesions of size $\geq 3$. This detail is necessary to make the connection with immersions of walls valid, but the consequence is that working with tree-cut width is often plagued with technicalities concerning {\em{thin}} adhesions, that is, adhesions of size at most $2$.

We remark that Wollan~\cite{wollan2015structure} used a more convoluted, and arguably more unwieldy definition of the width of a tree-cut decomposition (see Section~\ref{sec:prelim} for a discussion). The understanding presented above was developed by Giannopoulou et al.~\cite{giannopoulou2016linear}, who proposed another, roughly equivalent way of measuring the width. The width measure of Giannopoulou et al. is easier to control during modifications of a tree-cut decomposition, but also has its own share of~problems.

Tree-cut width and ideas around tree-cut decompositions have so far found several applications, mostly connected with immersions. The original motivation of Wollan was to use tree-cut width and the wall immersion theorem to give a structure theorem for graphs excluding a fixed immersion~\cite{wollan2015structure}, a theorem that was obtained independently by \cite{devos2013note}.
These ideas were later used by Dvo\v{r}\'ak and Wollan~\cite{dvovrak2016structure} to give a similar result for the setting of excluding {\em{strong immersions}}; this is a different, but closely related embedding notion. The connections between tree-cut decompositions and wall immersions also turned out to be important in the proof of Liu of the Erd\H{o}s-P\'osa property for immersion models in $4$-edge-connected graphs~\cite{liu2015packing}. On the algorithmic side, Giannopoulou et al.~\cite{giannopoulou2016linear} used the excluded wall immersion theorem of Wollan~\cite{wollan2015structure} and the combinatorics of tree-cut decompositions to give linear kernels for edge deletion problems for immersion-closed classes that exclude at least one subcubic planar graph as an immersion. Dynamic programming on tree-cut decompositions was explored by Ganian et al.~\cite{Ganian0S15}, and an FPT $2$-approximation algorithm for tree-cut width was proposed by Kim et al.~\cite{KimOPST18}. More recently, Giannopoulou et al.~\cite{GiannopoulouKRT19,GiannopoulouKRT21} proved a result about the existence of {\em{lean}} tree-cut decompositions of optimum width, which is an analogue of a classic result of Thomas for tree decompositions~\cite{thomas1990menger}. Using this, they obtained computable upper bounds on the sizes of immersion obstructions for having tree-cut width $\leq k$~\cite{GiannopoulouKRT19}.

\paragraph*{Our contribution.} We believe that despite extensive work on tree-cut width and tree-cut decompositions, the currently used definitions of width---due to Wollan~\cite{wollan2015structure} and to Gianno\-poulou et al.~\cite{giannopoulou2016linear}---still seem somewhat unnatural or arbitrary, and hence not completely understood. The main motivation of this work is to find a definition for tree-cut width that would feel more ``right''.

Comparing the situation with tree decompositions and treewidth, an aspect that convinces us about the naturalness of the definition of treewidth is the existence of naturally defined tightly dual objects: {\em{brambles}} and {\em{tangles}}. Precisely, as proved in part by Robertson and Seymour in \cite{robertson1991graph}, and later by Seymour and Thomas in \cite{seymour1993graph} (see also \cite{diestel2012graph}), the following conditions are equivalent for a graph $G$ and a positive integer $k$:
\begin{itemize}[nosep]
 \item $G$ has no tree decomposition of width $<k$;
 \item $G$ has a bramble of order $\geq k$; and
 \item $G$ has a tangle of order $\geq k$.
\end{itemize}
A corollary of this result is an elegant characterization of treewidth in terms of the cops and robber game~\cite{seymour1993graph}.

The duality theorem for treewidth described above is by now considered a fundamental result in structural graph theory. Several different proofs can be found in the literature~\cite{seymour1993graph,mazoit-brambles,diestel2012graph}. The concepts of brambles and tangles have been also generalized by Diestel to abstract separation systems~\cite{diestel2018abstract}, culminating in a general theorem of Diestel and Oum~\cite{diestel2021tangle}, which states that if a separation system admits a number of technical conditions, then one can derive a suitable tight duality theorem for (appropriately defined) tree decompositions and tangles.
See the article of Diestel and Oum~\cite{diestel2019tangle} for a broader discussion of applications of this theorem to particular settings. Also, we remark that the work of Diestel and Oum was preceded by earlier works~\cite{amini2009submodular,lyaudet2010partitions} on abstract definitions of brambles.

In this work, we apply the same principle to tree-cut decompositions and tree-cut width. That is, we propose a different (but functionally equivalent) measure of the width of a tree-cut decomposition, and present natural analogues of brambles and tangles for which we can state and prove the following result: For a graph $G$ and a positive integer $k$, the following conditions are equivalent:
\begin{itemize}[nosep]
 \item $G$ has no tree-cut decomposition of width $<k$;
 \item $G$ has a bramble of order $\geq k$; and
 \item $G$ has a tangle of order $\geq k$.
\end{itemize}
We remark that in the last point, we mean (appropriately defined) tangles of {\em{edge separations}}: partitions of the vertex set, where the edges crossing the partition form the cutset. Tangles of this flavor were already considered: Liu~\cite{liu2015packing} used them extensively in the setting of the Erd\H{o}s-P\'osa property for immersion models, while Diestel and Oum~\cite{diestel2019tangle} gave a suitable tangle duality theorem for the parameter {\em{carving-width}}, which is related to tree-cut width.

An important aspect that emerged during our work is that tree-cut decompositions naturally have not one, but two orthogonal width measures. The {\em{bag-width}} is defined as the maximum size of a bag, and the {\em{adhesion-width}} (roughly) governs the sizes of adhesions. Mirroring this, brambles and tangles have two measures of order: {\em{bag-order}} and {\em{adhesion-order}}. We can prove the tight duality result also in this biparametric setting: For a graph $G$ and positive integers $a$ and $b$, the following conditions are equivalent:
\begin{itemize}[nosep]
 \item $G$ has no tree-cut decomposition of adhesion-width $<a$ and bag-width $<b$;
 \item $G$ has a bramble of adhesion-order $\geq a$ and bag-order $\geq b$; and
 \item $G$ has a tangle of adhesion-order $\geq a$ and bag-order $\geq b$.
\end{itemize}
In the hindsight, the biparametric aspect of tree-cut decompositions is visible in the previous definitions~\cite{wollan2015structure,giannopoulou2016linear}, but the two parameters were combined into a single width measure in a somewhat arbitrary way. We believe that handling the two parameters separately actually clarifies the situation.
We also note that a biparametric variant of treewidth has been considered in \cite{geelen2016generalization}.

Similarly to the setting of treewidth, we can use our duality theorem to give a characterization of tree-cut width in terms of a search game that we call {\em{cops, dogs, and robber game}}. The game is played similarly to the standard cops and robber game, with the exception that the $a$ cops are now always placed on edges of the graph, instead of vertices. The robber is always placed on a vertex and may move freely along paths that avoid edges occupied by cops\footnote{There is an additional technical rule that mirrors the exceptional treatment of thin adhesions.}. Obviously, cops placed on edges cannot directly catch the robber placed on a vertex, but they also have $b$ dogs for this purpose. Namely, once the size of the set of vertices to which the robber can possibly move is limited to $\leq b$, the cops can unleash the dogs on those vertices and immediately catch the robber. We prove that a graph has a tree-cut decomposition with adhesion-width $\leq a$ and bag-width $\leq b$ if and only if there is a strategy to catch the robber using $a$ cops and $b$ dogs.

As for the proofs, our argumentation closely follows the standard strategy used for the duality theorem for treewidth, but adjusted to the combinatorics of tree-cut decompositions. As we mentioned, this strategy is by now very well understood~\cite{seymour1993graph,mazoit-brambles,diestel2012graph}, even in the general setting of abstract separation systems~\cite{diestel2018abstract,diestel2019tangle}. In fact, we expect that much of the technical work presented in this paper, especially concerning the duality of tree-cut decompositions and tangles, can be derived from the general theorem of Diestel and Oum~\cite{diestel2021tangle}, similarly as has been done for carving-width in~\cite[Section~5]{diestel2019tangle}. However, we believe that following such a path would actually oppose one of our main goals: clarifying the combinatorics of tree-cut width in a transparent and self-contained manner. More precisely, applying the theorem of Diestel and Oum is not automatic, it requires recalling a list of abstract notions and verifying multiple technical properties of the specific separation systems that we work with. This would result in a proof that would be less transparent and not necessarily any shorter. On the other hand, a point that we would like to make by giving a direct proof is that once all the definitions are rightly set, the standard and well-understood proof strategy applies without any problems, providing a reasoning that is conceptually even simpler than that for the treewidth. 


\paragraph*{Organization.} In Section~\ref{sec:prelim} we set up the notation, recall standard notions and facts, and discuss the definitions of tree-cut width of Wollan~\cite{wollan2015structure} and of Giannopoulou et al.~\cite{giannopoulou2016linear}. In Section~\ref{sec:objects} we introduce our notion of width of a tree-cut decomposition and define the dual objects: brambles and tangles. Section~\ref{sec:proof} is devoted to the proof of the main duality theorem. In Section~\ref{sec:game} we derive the game characterization of tree-cut width.

\section{Preliminaries}\label{sec:prelim}

\paragraph*{Partitions.}
A \emph{near-partition} of a set $\Omega$ is a family $\X$ of pairwise-disjoint subsets of $\Omega$ such that $\Omega=\bigcup \X$. Note that we allow the elements of a near-partition to be empty. If this is not the case, then $\X$ is a {\em{partition}} of $\Omega$.

\paragraph*{Graphs.} All graphs considered in this paper are finite. We allow the existence of {\em{parallel edges}} (multiple edges with same pair of endpoints) but we do not allow loops (edges with both endpoints at the same vertex). All graphs are undirected unless explicitly stated.

Let $G$ be a graph. By $V(G)$ and $E(G)$ we denote the sets of vertices and of edges of $G$, respectively. For a vertex subset $X$, we define $\delta(X)$ to be the set of all edges with one endpoint in $X$ and the other in $V(G)\setminus X$. The \emph{degree} of a vertex $v$ is $|\delta(\{v\})|$.

A {\em{path}} in $G$ is a connected subgraph of $G$ where every vertex has degree $2$ apart from exactly two vertices of degree one, called the {\em{endpoints}} of the path. A {\em{cycle}} in $G$ is a connected subgraph of $G$ where every vertex has degree $2$. The {\em{length}} of a path or a cycle is defined as its edge count. Two vertices connected by a pair of parallel edges are considered a cycle of length $2$.

For a set of edges $F\subseteq E(G)$, we say that a pair of vertices $u,v$ is {\em{disconnected}} by $F$ if $u$ and $v$ belong to different connected components of $G-F$.

An \emph{(edge) separation}\footnote{We remark that most of literature in structural graph theory use term {\em{separation}} for a vertex separation, that is a pair $\{A,B\}$ of subsets of vertices with $A\cup B=V(G)$ and no edge between $A\setminus B$ and $B\setminus A$. Throughout this work we will solely work with edge separations as defined above, hence for brevity we call them simply {\em{separations}}.} in $G$ is a near-partition $\{A,B\}$ of $V(G)$ that has two elements, called the {\em{sides}}.
The \emph{order} of the separation $\{A,B\}$ is $|\delta(A)|=|\delta(B)|$. Separations of order at most $2$ are called \emph{thin}, whereas those of order at least $3$ are called \emph{bold}.

\newcommand{\tccrel}{\sim_{3\textrm{CC}}}

\paragraph*{$3$-edge-connectedness.}
Two vertices $u$ and $v$ in a graph $G$ are {\em{$3$-edge-connected}} if there exist three edge-disjoint paths connecting $u$ and $v$. By Menger's Theorem, the relation of $3$-edge-connectedness is an equivalence relation on $V(G)$. We will denote this relation by $\tccrel$; the graph $G$ will always be clear from the context. The equivalence classes of $\tccrel$ are called the {\em{$3$-edge-connected components}} of $G$.

A set of vertices $X\subseteq V(G)$ is {\em{$3$-edge-connected}} in $G$ if the vertices of $X$ are pairwise $3$-edge-connected in $G$. Equivalently, $X$ is $3$-edge-connected if it is entirely contained in a single $3$-edge-connected component of $G$.

Suppose $\sim$ is an equivalence relation on the vertex set of $G$. We define the {\em{quotient graph}} $\quo{G}{\sim}$ as follows. The vertices of $\quo{G}{\sim}$ are the equivalence classes of $\sim$, and each edge $uv$ of $G$ with $u\not\sim v$ gives rise to one edge $AB$ in $\quo{G}{\sim}$, where $A$ and $B$ are the equivalence classes of $\sim$ to which $u$ and $v$ belong, respectively. Note that thus, the number of parallel edges in $\quo{G}{\sim}$ connecting a pair $A,B$ of equivalence classes of $\sim$ is equal to the number of edges in $G$ whose one endpoint is in $A$ and the other is in $B$. Also, edges of $G$ whose endpoints belong to the same equivalence class of $\sim$ do not contribute to the edge set of $\quo{G}{\sim}$.

In Section~\ref{sec:3cc} we will study the structure of the quotient graph $\quo{G}{\tccrel}$, for any graph~$G$.

\paragraph*{Immersions.} 
We say that a graph $G$ admits a graph $H$ as an \emph{immersion} if there exists an {\em{immersion model}} of $H$ in $G$: a mapping $\pi$ defined on the vertices and edges of $H$ as follows:
\begin{itemize}[nosep]
\item $\pi$ maps vertices of $H$ to pairwise different vertices of $G$;
\item $\pi$ maps each edge $e$ of $H$ with endpoints $u$ and $v$ to a path in $G$ with endpoints $\pi(u)$ and $\pi(v)$; and
\item paths in $\{\pi(e)\colon e\in E(H)\}$ are pairwise edge-disjoint.
\end{itemize}
{\em Walls} are central to the notions presented in this paper.
A {\em $k \times k$ wall} is a graph on $2k^2$ vertices, constructed from $k$ (horizontal) paths $P_1,\dots,P_k$. Each $P_i$ has vertex set $\{v^i_1,\dots,v^i_{2k}\}$ where $v^i_j$ and $v^i_{j+1}$ are adjacent for all $1\leq j\leq 2k-1$, and there are the following additional edges between the paths:
\begin{itemize}[nosep]
	\item $v^i_jv^{i+1}_j$ if $i,j$ are odd, $1\leq i<k$, $1\leq j\leq 2k$;
 	\item $v^i_jv^{i+1}_j$ if $i,j$ are even, $1\leq i<k$, $1\leq j\leq 2k$.
\end{itemize}

\paragraph{Tree-cut width.}
We now recall the concept of tree-cut width, as defined by Wollan~\cite{wollan2015structure}, and then give an equivalent definition proposed by Giannopoulou et al.~\cite{giannopoulou2016linear}.
First, we need a notion of a decomposition.

\begin{definition}[Tree-cut decomposition]\label{def:tc-dec}
	A {\em{tree-cut decomposition}} of a graph $G$ is a pair $\T=(T,\X)$ such that $T$ is a tree and $\X=\{X_t\subseteq V(G) \mid t\in V(T)\}$ is a near-partition of the vertex set of $G$, indexed by the nodes of $T$:
	for every $t\in V(T)$, the set $X_t\in \X$ is called {\em bag} of $t$.
\end{definition}

We now introduce some useful definitions, which will eventually lead to a concept of the width of a tree-cut decomposition. Let us fix a tree-cut decomposition $\T=(T,\X)$ of a graph~$G$. For a pair of vertices $u,v$ of $G$ (not necessarily distinct), the {\em{trace}} of $\{u,v\}$ in $\T$ is the (unique) path in $T$ connecting the node $s$ satisfying $u\in X_s$, and the node $t$ satisfying $v\in X_t$. Note that if $u=v$, then the trace of $\{u,v\}$ consists of only one node, the one whose bag contains $u$. The trace of an edge of $G$ is the trace of its endpoints.

For an edge $st$ of $T$, the {\em{adhesion}} of $st$, denoted $\adh(st)$, is the set of all edges of $G$ whose traces contain $st$. Equivalently, an edge $uv\in E(G)$ belongs to $\adh(st)$ if and only if the node $s'$ satisfying $u\in X_{s'}$ and the node $t'$ satisfying $v\in X_{t'}$ lie in different connected components of $T-st$.
An edge $st$ of $T$ is called {\em{thin}} if $|\adh(st)|\leq 2$, and {\em{bold}} otherwise. In the notation $\adh(\cdot)$, the decomposition $\T$ can be specified in the subscript if it is not clear from the context.

Next, for each node $t$ of $T$ we define the {\em{torso}} $G_t$. Intuitively, $G_t$ is obtained from $G$ by identifying, for every connected component $C$ of $T-t$, all the vertices residing in the bags of $C$ into a single vertex. Formally, $G_t$ is defined as the quotient graph $\quo{G}{\sim_t}$, where $\sim_t$ is an equivalence relation on $V(G)$ defined as follows: $u\sim_t v$ if either $u=v$ or $u\neq v$ and the trace of $\{u,v\}$ in $\T$ does not contain $t$.

Finally, the {\em{$3$-center}} $\widetilde{G}_t$ of the torso $G_t$ is obtained from $G_t$ by iteratively suppressing vertices of degree at most $2$ in $G_t$, but only those that do not belong to $X_t$. Here, to {\em{suppress}} a vertex of degree at most $2$ means to either delete it, provided it had at most one neighbor, or delete it and add an edge connecting its two former neighbors, provided it had exactly two neighbors. It is not hard to see that the order of performing the suppressions does not matter.
However, note that the suppression of a vertex of degree $2$ can reduce the degree of another vertex outside of $X_t$, leading in turn to its suppression.

With all these notions in place, we can recall the definition of tree-cut width originally proposed by Wollan in~\cite{wollan2015structure}.

\begin{definition}[Wollan's tree-cut width]\label{def:tctw-Wollan}
The \emph{Wollan's width} of a tree-cut decomposition $\T=(T,\X)$ of a graph $G$ is defined as \[\max\left(\{|\adh(e)|\colon e\in E(T)\}\cup \{|V(\widetilde{G}_t)|\colon t \in V(T)\}\right).\]
The \emph{Wollan's tree-cut width} of $G$ is the minimum width of a tree-cut decomposition of~$G$. 
\end{definition}

In order to simplify arguments in their study of algorithmic problems related to immersions, Gianno\-poulou et al. introduced in~\cite{giannopoulou2016linear} an alternative definition of tree-cut width.
Let $G$ be a graph and $\T=(T,\X)$ be a tree-cut decomposition of $G$.
They define, for every node $t$ of $T$, the following quantity:
\[
	w_t \coloneqq |X_t| + |\{t'\in N_T(t) \colon |\adh(tt')|\geq 3\}|.
\]
They then propose the following adjustment of the definition of tree-cut width, which they show to be equivalent to Wollan's tree-cut width.

\begin{definition}[GPRTW's tree-cut width]\label{def:tctw-Giannopoulou}
The \emph{GPRTW's width} of a tree-cut decomposition $\T=(T,\X)$ of a graph $G$ is defined as 
\[
	\max\left(\{|\adh(e)|\colon e\in E(T)\}\cup \{w_t \colon t \in V(T)\}
	\vphantom{\widetilde{G}_t}\right).
\]
The \emph{GPRTW's tree-cut width} of $G$ is the minimum width of a tree-cut decomposition of~$G$. 
\end{definition}

\begin{theorem}[\cite{giannopoulou2016linear}]\label{thm:tctw-Wollan-Giannopoulou}
	For every graph $G$, the GPRTW's tree-cut width of $G$ and the Wollan's tree-cut width of $G$ are equal.
\end{theorem}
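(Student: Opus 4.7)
I would prove the equality via two matching inequalities.

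\emph{Easy direction (Wollan $\le$ GPRTW).} I would show that for every tree-cut decomposition $\T=(T,\X)$, the Wollan width of $\T$ is bounded by the GPRTW width of $\T$; taking the minimum over $\T$ then yields one inequality of the theorem. Since the adhesion terms $\{|\adh(e)|\colon e\in E(T)\}$ appear in both measures, it suffices to prove $|V(\widetilde{G}_t)|\le w_t$ for each $t$. The torso $G_t$ has vertex set $X_t\cup\{v_C:C\in\cc(T-t)\}$, with each virtual vertex $v_C$ having degree exactly $|\adh(tt_C)|$, where $t_C$ is the neighbor of $t$ lying in $C$. Suppressing every virtual of degree at most $2$ eliminates precisely those $v_C$ with $tt_C$ thin, leaving $|X_t|+|\{t'\in N_T(t):|\adh(tt')|\ge 3\}|=w_t$ vertices, and further cascading suppressions of bold virtuals whose degree has since dropped below $3$ can only reduce this count.

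\emph{Hard direction (GPRTW $\le$ Wollan).} Pointwise equality fails, since cascading can strictly reduce $|V(\widetilde{G}_t)|$ below $w_t$. Given $\T$ of Wollan width at most $k$, I would transform it by a finite sequence of local moves into a $\T'$ satisfying $w_t=|V(\widetilde{G}_t)|$ at every node, at which point the two measures coincide and are both bounded by $k$. The local move I have in mind is a \emph{re-attachment}. Assume $w_t>|V(\widetilde{G}_t)|$, so the iterative suppression in $G_t$ removes some bold virtual $v_C$. The only suppression steps in this process that can lower the degree of an adjacent virtual are deletions of a degree-$1$ vertex and deletions of a degree-$2$ vertex whose two incident edges go (in parallel) to the same neighbor; in both cases the suppressed virtual has all its edges to one other vertex. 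Tracing the cascade to its start identifies a thin neighbor $t_D$ of $t$ whose adhesion $\adh(tt_D)$ consists entirely of edges between bags of $D$ and bags of $C$. I would then remove the edge $tt_D$ and add the edge $t_Ct_D$, so the subtree at $t_D$ becomes a child of $t_C$. A direct verification shows that $|\adh(t_Ct_D)|=|\adh(tt_D)|$, that $|\adh(tt_C)|$ strictly decreases, that no other adhesion and no bag changes, and that $w_{t_C}$ does not grow (it loses a bold neighbor if $|\adh(tt_C)|$ falls below $3$, and only acquires a thin one). Thus Wollan's width stays at most $k$ while $w_t$ strictly decreases; a lexicographic potential on $(w_t)_{t\in V(T)}$ guarantees termination.

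\emph{Main obstacle.} The crux is the cascade analysis that justifies the existence of the thin neighbor $t_D$: one must show that whenever $w_t>|V(\widetilde{G}_t)|$, the suppression process really produces a thin neighbor of $t$ whose adhesion is supported entirely in edges between $D$ and a single adjacent component $C$ that contains a to-be-suppressed bold virtual. This amounts to classifying which suppression steps can propagate and tracking the degree contributions of each thin virtual back through the cascade. The re-attachment itself is then a routine local modification, but the verification that it never inflates $w_{t'}$ elsewhere or increases any adhesion requires careful bookkeeping at $t_C$ and inside the subtrees at $C$ and $D$.
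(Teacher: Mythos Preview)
This theorem is not proved in the paper; it is quoted from \cite{giannopoulou2016linear}, with only the remark that the cited proof ``applies a modification of the given decomposition in order to get the desired bound.'' So your overall plan for the hard direction is in the same spirit, and your easy direction is correct: each virtual vertex $v_C$ in $G_t$ has degree $|\adh(tt_C)|$, so suppressing the thin virtuals already leaves at most $w_t$ vertices, and further cascading only helps.

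The hard direction, however, has a genuine gap: the claim you flag as the ``main obstacle'' is not merely difficult, it is false as stated. Take a node $t$ with $X_t=\{x\}$ and four neighbouring subtrees $C,D_1,D_2,D_3$; put one $G$-edge from $C$ to each $D_i$, one edge between $D_1$ and $D_2$, and one edge between $D_3$ and $x$. Then $tt_C$ is the unique bold edge at $t$, so $w_t=2$, while the suppression cascade eliminates every virtual (including the bold $v_C$) and leaves $|V(\widetilde G_t)|=1$. Yet no thin $D_i$ has its adhesion supported entirely on edges to $C$: each $D_i$ also touches some other $D_j$ or the bag vertex $x$. Hence your re-attachment rule has no legal move at $t$, and nothing in your lexicographic potential decreases. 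The underlying issue is that the deletion which finally lowers $\deg(v_C)$ may involve a virtual $v_D$ whose \emph{current} edges all go to $v_C$ only because earlier degree-two suppressions redirected them there; the original adhesion $\adh(tt_D)$ need not be concentrated on $C$. A working version of your idea must either re-attach a whole cluster of thin subtrees simultaneously, or track a finer potential (for instance one involving $\sum_e|\adh(e)|$, or the number of thin neighbours of $t$) that strictly drops even under moves leaving every $w_{t'}$ fixed.
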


We point out that the equality described in Theorem~\ref{thm:tctw-Wollan-Giannopoulou} does not apply for every single tree-cut decomposition separately: there are tree-cut decompositions whose Wollan's width and GPRTW's width differ.  
An example can be obtained by taking a complete binary tree of depth $\geq 3$ as the graph $G$, and constructing a tree-cut decomposition $\T$ of $G$ whose underlying tree is a star. The bag of the center of $\T$ consists of the unique vertex of $G$ of degree $2$, while every other vertex of $G$ is placed in a different leaf bag of $\T$. It is easy to verify that $\T$ has Wollan's width $3$, while its GPRTW's width is $\frac{|V(G)|-1}{2}$.
There is no contradiction between this example and the statement of Theorem~\ref{thm:tctw-Wollan-Giannopoulou}: the proof provided in~\cite{giannopoulou2016linear} applies a modification of the given decomposition in order to get the desired bound.

Finally,
let us recall the main result of~\cite{wollan2015structure}: a grid theorem for tree-cut width and immersions. By Theorem~\ref{thm:tctw-Wollan-Giannopoulou}, we may state it equivalently in terms of GPRTW's tree cut-width and in terms of Wollan's tree-cut width.

\begin{theorem}[\cite{wollan2015structure}]\label{thm:grid}
 There exists a function $f\colon \N\to \N$ such that every graph that does not contain a $k\times k$ wall as an immersion has Wollan's (or GPRTW's) tree-cut width at most $f(k)$.
\end{theorem}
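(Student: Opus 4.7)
The plan is to prove the contrapositive: for a sufficiently fast-growing function $f$, if a graph $G$ has Wollan's tree-cut width exceeding $f(k)$, then $G$ admits a $k \times k$ wall as an immersion. The reverse direction---that a $k \times k$ wall itself has Wollan's tree-cut width $\Omega(k)$, and that Wollan's tree-cut width is monotone under taking immersions---is comparatively straightforward: one checks directly that the wall's internal $3$-edge-connectivity forces any tree-cut decomposition to have either a large bag or a large bold adhesion, and monotonicity follows by a projection argument mapping vertices of an immersed subgraph to the bags containing their images.

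For the hard direction, I would proceed via an intermediate structural object: a \emph{highly edge-connected set}, that is, a set $W \subseteq V(G)$ of size roughly polynomial in $k$ such that any two disjoint subsets $A, B \subseteq W$ of size $k$ are linked by $k$ pairwise edge-disjoint paths in $G$. The key claim is that high Wollan's tree-cut width forces the existence of such a set $W$. To prove this, I would build a tree-cut decomposition greedily: whenever no such $W$ exists inside a given piece, Menger's theorem supplies cheap edge separations that break the piece into smaller ones while keeping the $3$-center of the corresponding torso small; gluing these separations together produces a tree-cut decomposition of width bounded as a function of $k$, contradicting the hypothesis.

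The most technical step is then to convert a highly edge-connected set $W$ into an actual $k \times k$ wall immersion. The natural approach is to fix $2k^2$ branch vertices inside $W$ and to route the edges of the wall one by one as pairwise edge-disjoint paths between their intended endpoints, using repeated applications of Menger's theorem. The main obstacle here is the bookkeeping: each routed path depletes the edge-connectivity available for subsequent routings, so one must either start with $W$ being so over-connected that a union bound absorbs the losses---leading to tower-type bounds on $f$---or deploy a more refined amortized argument that carefully reuses capacity around already-built parts of the partial wall. I would also note that a cleaner route to the same result is likely available via the tangle/bramble duality theorem developed later in this paper: a large-order bramble or tangle of edge separations ought to furnish the highly edge-connected set directly, bypassing the greedy extraction and mirroring the classical argument from brambles to grid minors in the treewidth setting.
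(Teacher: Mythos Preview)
The paper does not prove this theorem. It is stated in the preliminaries as a result of Wollan~\cite{wollan2015structure} and is merely recalled, with no argument given; the paper's own contributions (the duality theorem and the game characterization) take Theorem~\ref{thm:grid} as an external input. There is therefore no proof in the paper against which to compare your proposal.

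On the substance of your sketch: the broad shape (extract a highly edge-linked set from the failure of a bounded-width decomposition, then route a wall through it) is indeed the standard template for excluded-grid theorems and is in the spirit of Wollan's argument. Two remarks, however. First, your ``greedy decomposition'' step is the real crux and is left entirely at the level of intention; showing that the absence of a well-linked set lets one recursively split along small edge cuts while controlling the $3$-centers of torsos is where most of the work in~\cite{wollan2015structure} lies, and your sketch gives no indication of how the thin/bold distinction and the $3$-center suppression are handled. Second, your final suggestion---that the bramble/tangle duality developed in this paper would yield a cleaner route---is misguided. The duality theorem (Theorem~\ref{thm:trinity}) translates high tree-cut width into the existence of a bramble or tangle, but converting a bramble or tangle of edge separations into a wall \emph{immersion} is precisely the content of Theorem~\ref{thm:grid}, not a corollary of duality. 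This is exactly analogous to the treewidth setting, where the bramble/tangle duality of Seymour--Thomas does not by itself imply the grid minor theorem of Robertson--Seymour.
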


Note that if $G$ is a star with $n-1$ leaves and every edge of multiplicity $2$, then it only admits graphs with at most one vertex of degree higher than $2$ as an immersion.
On the other hand, in a star with $n-1$ leaves and every edge of multiplicity $3$, we already find immersion models of all $n$-vertex subcubic graphs (see~\cite[Observation 1]{wollan2015structure}) and in particular, walls.
This gives the intuitive reason behind treating bold and thin edges in tree-cut decompositions differently. 
Indeed, if $\mathcal{T}=(T,\mathcal{X})$ is a tree-cut decomposition of $G$, and $tt'$ is a thin edge of $T$, no two ``internal'' vertices of a wall immersion model in $G$ may lie in different connected components of $T-tt'$.

\section{Objects}\label{sec:objects}

In this section we introduce our definition of tree-cut width and relate it to Wollan's definition. Next, we define the dual objects---brambles and tangles---and state our main result.

\subsection{Tree-cut decompositions}

The idea is to use the same notion of tree-cut decompositions, as introduced in Definition~\ref{def:tc-dec}, but to redefine the width. We do it as follows.

Let $\T=(T,\X)$ be a tree-cut decomposition of a graph $G$.
The \emph{adhesion} of a node $t\in V(T)$ is defined as the union of adhesions of bold edges incident to $t$, namely,
\[
	\adh(t)\coloneqq \bigcup \left\{\adh(st)\colon s\textrm{ is such that }st\in E(T)\textrm{ and }|\adh(st)|\geq 3\right\}.
\]
We point that even if an edge $e$ of $G$ participates in the adhesions of two bold edges of $T$ incident to $t$, $e$ is counted only once when computing the cardinality of the adhesion of $t$.
Also note that since only bold edges contribute to the adhesion of a node, for each $t \in V(T)$ we have either $|\adh(t)|=0$ or $|\adh(t)|\geq 3$.
This is illustrated in Figure~\ref{fig:tree}.

	\begin{figure}
	  \center
	  \includegraphics[scale=1.2]{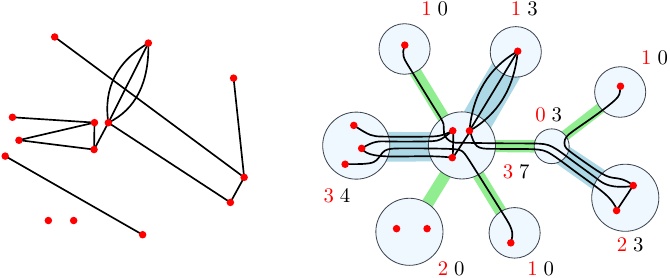}
	  \caption{A tree-cut decomposition (right) of a graph (left). Bold adhesions are depicted by large blue edges, while thin adhesions are thin and green. Near each bag lie two numbers~:~red numbers (first) stand for sizes of bags, and black numbers (second) for sizes of adhesions.}\label{fig:tree}
	\end{figure}

We now present our concept of width. The key aspect is that we will work with two separate width measures, respectively corresponding to the adhesions and to the bags.

\begin{definition}[Adhesion-width and bag-width]\label{def:ab-width}
	Let $\T=(T,\X)$ be a tree-cut decomposition of a graph $G$.
	The {\em{adhesion-width}} of $\T$ and the {\em{bag-width}} of $\T$ are respectively defined as
	$$\adhw(\T)=\max_{t\in V(T)}\,|\adh(t)|\qquad\textrm{and}\qquad \bagw(\T)=\max_{t\in V(T)}\,|X_t|.$$
\end{definition}

Obviously, we can combine the two width measures into one by taking the maximum, and thus we arrive at our proposition for the notion of tree-cut width.

\begin{definition}[ab-tree-cut width]\label{def:tctw-our}
 The {\em{ab-tree-cut width}} of a graph $G$ is the least number $k$ such that $G$ has a tree-cut decomposition with adhesion-width $\leq k$ and bag-width $\leq k$.
\end{definition}

Thus, we have now three notions of tree-cut width: Wollan's tree-cut width (Definition~\ref{def:tctw-Wollan}) and GPRTW's tree-cut width (Definition~\ref{def:tctw-Giannopoulou}) that are equivalent, and the ab-tree-cut width (Definition~\ref{def:tctw-our}).
Let us note that, similarly to Wollan's definition, the new notions of width are closed under taking immersions.

\begin{proposition}\label{prop:tree-cut-immersion}
	Let $G$ be a graph that contains a graph $H$ as an immersion. Suppose $G$ admits a tree-cut decomposition with adhesion-width $\leq a$ and bag-width $\leq b$, for some positive integers~$a,b$. Then so does $H$.
\end{proposition}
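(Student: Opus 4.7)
The plan is to lift the tree-cut decomposition of $G$ to a tree-cut decomposition of $H$ along the immersion model, preserving (or strengthening) both width measures simultaneously.

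Let $\T = (T, \X)$ be a tree-cut decomposition of $G$ with $\adhw(\T) \leq a$ and $\bagw(\T) \leq b$, and let $\pi$ be an immersion model of $H$ in $G$. I would define $\T' = (T, \X')$ on the \emph{same} underlying tree $T$, with bags $X'_t \coloneqq \{v \in V(H) : \pi(v) \in X_t\}$. Since $\pi$ is injective on vertices and $\X$ covers $V(G) \supseteq \pi(V(H))$, the family $\X'$ is a near-partition of $V(H)$, so $\T'$ is a valid tree-cut decomposition. Injectivity of $\pi$ on vertices also gives $|X'_t| \leq |X_t| \leq b$ for every node $t$, so $\bagw(\T') \leq b$.

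The core of the argument is bounding adhesions. I would first show that for every edge $st \in E(T)$,
\[
|\adh_{\T'}(st)| \leq |\adh_\T(st)|.
\]
To see this, fix $st \in E(T)$ and let $\{A, B\}$ be the corresponding separation of $V(G)$. If $f = uv \in \adh_{\T'}(st)$, then $u$ and $v$ lie in bags of $\T'$ on different sides of $st$, which by construction means $\pi(u)$ and $\pi(v)$ lie in bags of $\T$ on different sides of $st$. Hence the path $\pi(f) \subseteq G$ from $\pi(u)$ to $\pi(v)$ must cross the separation $\{A, B\}$ and therefore use at least one edge of $\adh_\T(st)$. Choose one such edge $\phi(f) \in E(\pi(f)) \cap \adh_\T(st)$. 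Because the paths $\{\pi(e) : e \in E(H)\}$ are pairwise edge-disjoint, the map $\phi : \adh_{\T'}(st) \to \adh_\T(st)$ is injective, yielding the claimed inequality.

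From this, bold edges of $\T'$ incident to a node $t$ are a subset of the bold edges of $\T$ incident to $t$. Applying the same edge-disjointness trick to the (now possibly larger) union $\adh_{\T'}(t) = \bigcup\{\adh_{\T'}(st) : st \text{ bold in } \T'\}$, I would send each $f \in \adh_{\T'}(t)$ to some edge of $\adh_\T(st) \subseteq \adh_\T(t)$ appearing on $\pi(f)$; edge-disjointness of the paths again ensures injectivity, giving $|\adh_{\T'}(t)| \leq |\adh_\T(t)| \leq a$. Taking the maximum over $t$ yields $\adhw(\T') \leq a$, completing the proof. The only genuine care point is the very last step — one has to use \emph{bold-in-$\T'$} rather than \emph{bold-in-$\T$} to define $\adh_{\T'}(t)$, and verify that passing through the inclusion of bold-edge sets does not spoil the injective assignment; this follows because the chosen edge $\phi(f)$ belongs to $\adh_\T(st)$ for an edge $st$ that is bold in $\T$ (being bold already in $\T'$), hence lands inside $\adh_\T(t)$.
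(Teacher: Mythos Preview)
Your proof is correct and follows essentially the same approach as the paper: pull back the bags along $\pi$, and control adhesions via the edge-disjointness of the image paths. The only cosmetic difference is that the paper packages the adhesion bound using a partial function $\eta\colon E(G)\rightharpoonup E(H)$ (sending each edge of $G$ to the unique $e\in E(H)$ with that edge on $\pi(e)$, if any) and shows $\adh_{\T'}(t)\subseteq \eta(\adh_\T(t))$, whereas you build an explicit injection $\phi$ in the opposite direction; these are two sides of the same coin, and your handling of the bold-in-$\T'$ versus bold-in-$\T$ issue is exactly the point that makes the node-adhesion step go through.
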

\begin{proof}
	Let $\T=(T,\X)$ be a tree-cut decomposition of $G$ with $\adhw(\T)\leq a$ and $\bagw(\T)\leq b$. 
	Fix an immersion model $\pi$ of $H$ in $G$ and for each $t\in V(T)$ define $Y_t\coloneqq\pi^{-1}(X_t)\subseteq V(H)$ to be the set of vertices of $H$ whose images are contained in the bag $X_t$. 
	We prove that $\T'\coloneqq (T,\{Y_t\}_{t\in V(T)})$ is a tree-cut decomposition of $H$ with the desired properties.

	Note that $\{Y_t\}_{t\in V(T)}$ is a near-partition of $V(H)$. 
	As $\pi$ maps vertices of $H$ injectively, we have $|Y_t|\leq |X_t|$ for every $t\in V(T)$, which implies $\bagw(\T')\leq \bagw(\T)$. 
	Observe that the trace in $\T'$ of an edge $e\in E(H)$ with endpoints $u$ and $v$ is contained in the union of traces in $\T$ of edges of the path $\pi(e)$. 
	Moreover, for every edge $st\in E(T)$, if $e$ is an edge of $H$ in $\adh_{\T'}(st)$ with endpoint $u$ and $v$, then there is at least one edge $f$ on the path $\pi(e)$ with $f\in\adh_{\T}(st)$. 
	
	Consider a partial function $\eta\colon E(G)\rightharpoonup E(H)$ defined as follows: if $f\in E(G)$ belongs to $\pi(e)$ for some $e\in E(H)$, then we set $\eta(f)\coloneqq e$, and otherwise $f$ is not in the domain of $\eta$. Note that the validity of this definition is asserted by the fact that the paths in $\{\pi(e)\colon e\in E(G)\}$ are pairwise edge-disjoint. With this notion in place, the observations from the previous paragraph show that 
	$$\adh_{\T'}(st)\subseteq \eta\left(\adh_{\T}(st)\right)\qquad \textrm{for every }st\in E(T).$$
	This implies that
	$$\adh_{\T'}(t)\subseteq \eta\left(\adh_{\T}(t)\right)\qquad \textrm{for every }t\in V(T).$$
	It follows that $\adhw(\T')\leq \adhw(\T)$, as required.
\end{proof}

\subsection{Comparison with original tree-cut width}

We now verify that our new definition is functionally equivalent to the one of Wollan. Formally, we prove the following statement which involves the adjusted definition of tree-cut width proposed by Giannopoulou et al.

\begin{theorem}\label{thm:wollan-comparison}
 Let $G$ be a graph and $\T$ be a tree-cut decomposition of $G$. Then:
 \begin{itemize}[nosep]
  \item If $\T$ has GPRTW's width $k$, then $\T$ has adhesion-width $\leq k^2$ and bag-width $\leq k$.
  \item If $\T$ has adhesion-width $a$ and bag-width $b$, then $\T$ has GPRTW's width $\leq a+b+2$.
 \end{itemize}
\end{theorem}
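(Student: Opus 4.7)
The plan is to verify each of the two bulleted implications by unpacking the definitions and performing a short double-counting argument in the second case.

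\textbf{First bullet (GPRTW $\Rightarrow$ ab).} Assume $\T=(T,\X)$ has GPRTW's width $k$, so both $|\adh(e)| \le k$ for every edge $e\in E(T)$ and $w_t = |X_t| + |\{t'\in N_T(t): |\adh(tt')|\ge 3\}| \le k$ for every node $t\in V(T)$. The bag-width bound is immediate, since $|X_t|\le w_t \le k$. For the adhesion-width, fix $t\in V(T)$ and let $m_t$ denote the number of bold edges of $T$ incident to $t$. By the very definition of $w_t$, we have $m_t \le w_t \le k$. Since $\adh(t)$ is a union of $m_t$ sets each of size $\le k$, it follows that $|\adh(t)|\le m_t \cdot k \le k^2$.

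\textbf{Second bullet (ab $\Rightarrow$ GPRTW).} Assume $\adhw(\T)\le a$ and $\bagw(\T)\le b$. We need to bound $|\adh(e)|$ for every $e\in E(T)$ and $w_t$ for every $t\in V(T)$ by $a+b+2$. For an edge $e\in E(T)$, either $e$ is thin, in which case $|\adh(e)|\le 2$, or $e$ is bold and incident to some endpoint $t$, in which case $\adh(e) \subseteq \adh(t)$ and so $|\adh(e)| \le a$. In both cases $|\adh(e)|\le a+2 \le a+b+2$.

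For the bound on $w_t$, the main point is to bound the number $m_t$ of bold edges of $T$ incident to $t$ via a double-counting argument. Consider the sum $\Sigma \coloneqq \sum_{tt'\in E(T), \text{ bold}} |\adh(tt')|$. On the one hand, each such edge contributes at least $3$, so $\Sigma \ge 3 m_t$. On the other hand, an edge $e\in E(G)$ is counted in $\Sigma$ once for each bold edge of $T$ incident to $t$ that lies on the trace of $e$ in $\T$; since the trace of $e$ is a path in $T$, at most two edges incident to $t$ can lie on it, and every such edge $tt'$ contributing to $\Sigma$ satisfies $e\in\adh(tt')\subseteq\adh(t)$. Hence $\Sigma \le 2|\adh(t)| \le 2a$, which yields $m_t \le \tfrac{2a}{3}$. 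We obtain $w_t = |X_t|+m_t \le b+\tfrac{2a}{3} \le a+b+2$, completing the bound on the GPRTW's width.

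The only even slightly delicate point is the double-counting bound $\Sigma \le 2|\adh(t)|$; this is the step I would write out with the most care, by explicitly identifying, for each $e$ counted in $\Sigma$, the (at most two) edges of the trace of $e$ incident to $t$. Everything else is a straightforward unfolding of the definitions of adhesion-width, bag-width, $w_t$, and boldness.
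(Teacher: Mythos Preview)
Your proof is correct. The first bullet is identical to the paper's argument. For the second bullet, the paper instead splits into the cases $a=0$ and $a\geq 3$ (using that the adhesion-width is always either $0$ or $\geq 3$), and in the latter case simply asserts that $w_t\leq |X_t|+|\adh(t)|$, i.e.\ that $m_t\leq |\adh(t)|$, without further justification. Your double-counting argument is a cleaner and more rigorous substitute: it avoids the case split, actually \emph{proves} the bound on $m_t$ (indeed the sharper $m_t\leq \tfrac{2}{3}|\adh(t)|$), and thereby yields $w_t\leq b+\tfrac{2a}{3}$, which is slightly stronger than the paper's $w_t\leq a+b$. The key observation---that each $e\in E(G)$ lies in the adhesion of at most two tree edges incident to $t$ because its trace is a path---is exactly the right ingredient, and it is worth spelling out as you suggest.
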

\begin{proof}
	We show the first implication.
	Let us assume that $\T$ has GPRTW's width $k$.
	In particular, we have $\max\{|\adh(e)|\colon e\in E(T)\}\leq k$ and $\max\{w_t \colon t \in V(T)\}\leq k$.
	Since by definition $w_t \geq |X_t|$ for any $t\in V(T)$, we immediately obtain that $\T$ has bag-width $\leq k$.
	Concerning the adhesion-width, note that $|\{t'\in N_T(t) \colon |\adh(tt')|\geq 3\}|\leq k$ for every ${t\in V(T)}$.
	Since in addition $|\adh(tt')|\leq k$ for every $tt'\in E(T)$, we get that $\T$ has adhesion-width $\leq k^2$.

	We now show the other implication.
	Suppose that $\T$ has adhesion-width $a$ and bag-width~$b$.
	If $a=0$ then $\T$ only contains thin adhesions, and we deduce that GPRTW's width is bounded by the size of a largest bag in $\T$, plus the size of a largest thin adhesion in $\T$ (as the size of thin adhesions actually appears in Definition~\ref{def:tctw-Giannopoulou}).
	Hence $\T$ has GPRTW's width $\leq b+2$ in that case.
	Otherwise by definition, $a\geq 3$.
	Clearly, $|\adh(e)|\leq a$ for every $e\in E(T)$ in that case.
	Since for every $t\in V(T)$, $w_t$ is bounded by the size of $X_t$ plus the size of the adhesion of $t$, we conclude that GPRTW's width is bounded by the maximum of the following two quantities: the largest size of an adhesion of an edge in $\T$, and the largest cumulated size of $X_t$ and the adhesion of $t$, for a node $t$ in $\T$.
	This is bounded by $\max(a, a+b)$, hence the conclusion.
\end{proof}

By combining Theorem~\ref{thm:wollan-comparison} and~\ref{thm:tctw-Wollan-Giannopoulou}, we immediately get the following.

\begin{corollary}\label{cor:equivalent}
 Let $G$ be a graph and let $k$ and $\ell$ be the Wollan's tree-cut width and the ab-tree-cut width of $G$, respectively. Then
 \[
 	\frac{k}{2}-1\leq \ell\leq k^2.
 \]
\end{corollary}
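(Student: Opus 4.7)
The plan is simply to chain the two results invoked in the statement, applied in the two possible directions. The only subtlety is that Theorem~\ref{thm:tctw-Wollan-Giannopoulou} is an equality between the \emph{minima} over all decompositions (as emphasised in the discussion after that theorem), so I must be careful to pick an optimal decomposition for the measure I start from, rather than trying to compare both widths on a single fixed decomposition.

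For the upper bound $\ell \le k^2$: start from a tree-cut decomposition $\T$ of $G$ whose Wollan's width equals $k$. By Theorem~\ref{thm:tctw-Wollan-Giannopoulou}, the GPRTW's tree-cut width of $G$ equals $k$ as well, so I can pick $\T$ to have GPRTW's width at most $k$. The first bullet of Theorem~\ref{thm:wollan-comparison} then gives $\adhw(\T) \le k^2$ and $\bagw(\T) \le k \le k^2$, so $\max(\adhw(\T),\bagw(\T)) \le k^2$, and hence $\ell \le k^2$ by Definition~\ref{def:tctw-our}.

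For the lower bound $\frac{k}{2} - 1 \le \ell$: start from a tree-cut decomposition $\T$ of $G$ with $\adhw(\T) \le \ell$ and $\bagw(\T) \le \ell$, which exists by definition of ab-tree-cut width. The second bullet of Theorem~\ref{thm:wollan-comparison} shows that $\T$ has GPRTW's width at most $\ell + \ell + 2 = 2\ell + 2$, so the GPRTW's tree-cut width of $G$ is at most $2\ell + 2$. Applying Theorem~\ref{thm:tctw-Wollan-Giannopoulou} again, this bound transfers to Wollan's tree-cut width, yielding $k \le 2\ell + 2$, that is, $\ell \ge \frac{k}{2} - 1$.

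There is no real obstacle here; the one place where care is needed is to invoke Theorem~\ref{thm:tctw-Wollan-Giannopoulou} only at the level of the graph parameter and not on a fixed decomposition, since the comparison between Wollan's width and GPRTW's width can fail on a single decomposition (as the binary-tree example following Theorem~\ref{thm:tctw-Wollan-Giannopoulou} shows). Once that is observed, the corollary is just the composition of the two inequalities of Theorem~\ref{thm:wollan-comparison} with the equality of the two classical definitions.
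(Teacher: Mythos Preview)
Your argument is correct and matches the paper's approach exactly: the paper simply states that the corollary follows ``by combining Theorem~\ref{thm:wollan-comparison} and~\ref{thm:tctw-Wollan-Giannopoulou}'', and you have spelled out that combination. Your care in applying Theorem~\ref{thm:tctw-Wollan-Giannopoulou} only at the level of the graph parameter (rather than on a fixed decomposition) is exactly the right precaution, though in the upper-bound paragraph the first sentence about a Wollan-optimal decomposition is superfluous---you may as well start directly from a GPRTW-optimal one.
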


From Corollary~\ref{cor:equivalent} we infer that Theorem~\ref{thm:grid} is also true for ab-tree-cut width.

\subsection{Brambles}

We now move to the first definition of a dual object: a bramble. First, we introduce {\em{slabs}}, which are elements from which the brambles are composed.

\begin{definition}[Slabs]
	A \emph{slab} in a graph $G$ is a pair $(H,K)$ where $H$ is a connected subgraph of $G$ and $K$, called the {\em{core}} of the slab, is a non-empty subset of vertices of $H$ that is $3$-edge-connected in~$G$. Two slabs $(H,K)$ and $(H',K')$ {\em{touch}} if they intersect on their cores, that is, $K\cap K'\neq \emptyset$.
\end{definition}

With slabs in place, brambles are defined as follows.

\begin{definition}[Brambles]
	A \emph{bramble} in a graph is a family of pairwise touching slabs.
\end{definition}

Note that since $3$-edge-connectedness is transitive, all cores of all the slabs in a bramble must be contained in a single $3$-connected component of the graph $G$.

Next, we need to define the order of a bramble. Mirroring the situation in Definition~\ref{def:ab-width}, there will be two notions of an order: one corresponding to adhesions and one corresponding to the bags. For the first one, we need an appropriate notion of hitting a slab.

\begin{definition}[Disconnecting sets]\label{def:disc-set}
	Let $G$ be a graph and $(H,K)$ be a slab in $G$. A set of edges $F\subseteq E(G)$ {\em{disconnects}} $(H,K)$ if there are two vertices $u,v\in K$ that are disconnected by $F\cap E(H)$ in $H$. Further, $F$ is a {\em{disconnecting set}} for a bramble $\B$ if $F$ disconnects every slab in $\B$.
\end{definition}

Note that if the core $K$ of a slab $(H,K)$ has size $1$, then there is no edge subset that disconnects $(H,K)$. We now proceed with defining the order(s) of a bramble.

\begin{definition}[Orders of a bramble]
    Let $\B$ be a bramble in a graph $G$.
	The \emph{adhesion-order} of $\B$
	is the minimum size of a disconnecting set for $\B$, or $+\infty$ if no such disconnecting set exists.
	The \emph{bag-order} of $\B$ is defined as $\min_{(H,K)\in \B} |K|$, or $+\infty$ if $\B$ is empty. The {\em{order}} of $\B$ is the minimum of the adhesion-order and the bag-order of $\B$.
\end{definition}

\subsection{Tangles}

We now move to the second definition of a dual object: a tangle. For this, we need to take a closer look at (edge) separations. The following definitions are essentially taken from the presentation in the book of Diestel~\cite{diestel2012graph}, which in turns cites the work of Diestel and Oum~\cite{diestel2021tangle} as the source of inspiration.

Let us fix a graph $G$.
Consider a separation $\{A,B\}$ of $G$. With such a separation we can associate two {\em{oriented separations}} $(A,B)$ and $(B,A)$.
We say that the oriented separation $(A,B)$ \emph{points toward} $B$.
For a set $\S$ of separations, define $\vec{\S}\coloneqq \{(A,B),(B,A) \colon \{A,B\}\in \S\}$ as the set of all oriented separations associated with the elements of $S$.
An \emph{orientation} of a set $\S$ of separations is a subset $\Ll\subseteq \vec{S}$ which contains precisely one oriented separation associated with every element of $\S$. 
We say that $\Ll$ \emph{avoids} a collection $\Sigma$ of sets of oriented separations if no subset of $\Ll$ belongs to $\Sigma$.

A set $\sigma$ of oriented separations is \emph{consistent} if it does not simultaneously contain separations $(A,B)$ and $(C,D)$ such that $B\cap D=\emptyset$.
Intuitively, this means that there are no two separations in $\sigma$ that ``point away'' from each other.
A non-empty consistent set of oriented separations $\sigma$ is a \emph{star} if $A\cap C=\emptyset$ for all distinct $(A,B),(C,D)\in \sigma$.
A star is illustrated in Figure~\ref{fig:star}.

\begin{figure}
  \center
  \includegraphics[scale=1.1]{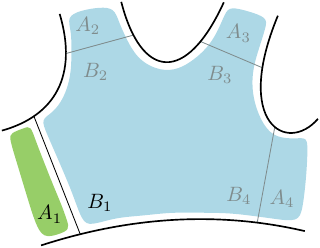}
  \caption{The separations $(A_1,B_1),\dots,(A_4,B_4)$ form a star; $A_1$ is in green and $B_1$ in blue.}\label{fig:star}
\end{figure}

For a positive integer $a$, we let $\S_a$ be the set of all separations of order $<a$ in $G$. Further, for a positive integer $b$, we let $\Sigma_{a,b}$ be the set of all stars $\sigma\subseteq \vec{\S}_a$ satisfying the following conditions:
\begin{align*}
	\left|\bigcup \{ \delta(A) \colon (A,B)\in \sigma \text{ and } |\delta(A)|\geq 3\}\right|&<a,\\
	\left|\bigcap \{ B \colon (A,B)\in \sigma\}\right|&<b.
\end{align*}
We can now present the definition of a tangle.

\begin{definition}[Tangles]
	For a pair of positive integers $a$ and $b$, an \emph{$(a,b)$-tangle} is a consistent orientation of $\S_a$ that avoids~$\Sigma_{a,b}$. We will say that an $(a,b)$-tangle has {\em{adhesion-order}} $a$, {\em{bag-order}} $b$, and {\em{order}} $\min(a,b)$.
\end{definition}
%

\subsection{Main result}

All the pieces are now set and we can state our main result.

\begin{theorem}\label{thm:trinity}
	For any graph $G$ and positive integers $a$ and $b$, the following are equivalent:
	\begin{enumerate}[label=(A\arabic*),ref=(A\arabic*),leftmargin=*]
		\item\label{s:bramble} $G$ has a bramble of adhesion-order $\geq a$ and bag-order $\geq b$;
		\item\label{s:tangle} $G$ has a tangle of adhesion-order $\geq a$ and bag-order $\geq b$;
		\item\label{s:decomp} $G$ has no tree-cut decomposition of adhesion-width $<a$ and bag-width $<b$.
	\end{enumerate}
\end{theorem}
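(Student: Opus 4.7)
The plan is to prove the cycle $(A1) \Rightarrow (A2) \Rightarrow (A3) \Rightarrow (A1)$, adapting the classical tangle-duality strategy of \cite{seymour1993graph,diestel2012graph} to edge-separations and the two-parameter width of tree-cut decompositions.

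For $(A1) \Rightarrow (A2)$, given a bramble $\B$ of adhesion-order $\geq a$ and bag-order $\geq b$, orient each separation $\{A, B\} \in \S_a$ toward the side containing the cores of $\B$. This side is well-defined: since $|\delta(A)| < a$, the adhesion-order hypothesis prevents $\delta(A)$ from disconnecting any slab of $\B$, so each core $K$ lies entirely in $A$ or entirely in $B$, and the touching condition forces the same choice across all slabs. Consistency of the resulting orientation $\tau$ holds because every ``target'' side contains all the cores of $\B$, which are non-empty by definition. For $\Sigma_{a,b}$-avoidance, any star $\sigma \subseteq \tau$ has every target side containing every core, so $|\bigcap_{(A,B) \in \sigma} B| \geq b$, hence $\sigma \notin \Sigma_{a,b}$.

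For $(A2) \Rightarrow (A3)$, suppose for contradiction both an $(a,b)$-tangle $\tau$ and a tree-cut decomposition $\T = (T,\X)$ with $\adhw(\T) < a$ and $\bagw(\T) < b$ exist. In the main case $a \geq 3$ and $|V(T)| \geq 2$, every edge $e \in E(T)$ has $|\adh(e)| < a$ (thin edges because $|\adh(e)| \leq 2 < a$; bold edges because $|\adh(e)| \leq |\adh(t)| \leq \adhw(\T) < a$ at any endpoint $t$), so $\tau$ orients every edge of $T$. The resulting orientation of a finite tree admits a sink $t^*$, and letting $V_1, \dots, V_m$ be the vertex sets of the components of $T - t^*$, the family $\sigma = \{(V_i, V(G) \setminus V_i) : 1 \leq i \leq m\}$ is a non-empty star inside $\tau$ satisfying both $|\bigcup\{\delta(V_i) : |\delta(V_i)| \geq 3\}| = |\adh(t^*)| < a$ and $|\bigcap_i (V(G) \setminus V_i)| = |X_{t^*}| < b$; hence $\sigma \in \Sigma_{a,b}$, contradicting the tangle axioms. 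The corner cases ($a \leq 2$ or $|V(T)| = 1$) reduce to inspecting the tangle's orientation of the trivial separation $\{\emptyset, V(G)\}$, which yields a singleton star in $\Sigma_{a,b}$ and the same contradiction.

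For $(A3) \Rightarrow (A1)$, the hard direction, we argue the contrapositive by induction on $|V(G)|$: assuming $G$ has no bramble of adhesion-order $\geq a$ and bag-order $\geq b$, we construct a tree-cut decomposition of adhesion-width $<a$ and bag-width $<b$. If $|V(G)| < b$, the trivial one-node decomposition works; otherwise we seek a splitting separation $\{A, B\} \in \S_a$ along which $G$ decomposes: each side becomes a smaller auxiliary graph by contracting the opposite side into a single vertex while preserving the cut edges, is recursively decomposed by induction, and the two resulting decompositions are joined along a tree edge whose adhesion is $\delta(A)$. Following the general scheme of Diestel and Oum \cite{diestel2021tangle}, we maintain throughout the induction an auxiliary laminar family $\Ll$ of already-used separations of order $<a$, and show that any inability to extend $\Ll$ laminarly would manifest as a family of pairwise-touching slabs---that is, a bramble of the forbidden orders. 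The main obstacle is the bookkeeping caused by the bold-versus-thin asymmetry: thin adhesions of $G$ can become bold in the auxiliary side graphs after contraction (and vice versa), so the induction hypothesis must be formulated carefully enough to guarantee that $\adhw < a$ and $\bagw < b$ are both preserved across the gluing step. This delicate interplay between the two width measures, handled transparently through the biparametric formulation, is precisely the aspect that the new definition is designed to streamline.
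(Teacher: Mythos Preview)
Your proof of $(A2)\Rightarrow(A3)$ is essentially the paper's argument and is fine. The other two implications have problems.

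\medskip
\textbf{The gap in $(A1)\Rightarrow(A2)$.} You write that ``the adhesion-order hypothesis prevents $\delta(A)$ from disconnecting \emph{any} slab of $\B$'', and conclude that every core lies entirely on one side of $\{A,B\}$. This misreads the definition: adhesion-order $\geq a$ only says that no set of size $<a$ disconnects \emph{every} slab simultaneously. A single separation of order $<a$ can perfectly well split some cores while leaving at least one intact. Consequently your $\Sigma_{a,b}$-avoidance argument (``every target side contains every core, hence $|\bigcap B|\geq b$'') breaks down. The paper's argument is more delicate: it shows only that each target side $B$ contains \emph{some} core, and then, assuming a bad star $\sigma\in\Sigma_{a,b}$ exists, picks an arbitrary slab $(H,K)$, uses $|K|\geq b>|\bigcap B|$ to find $(A,B)\in\sigma$ with $K\cap A\neq\emptyset$, argues $K\not\subseteq A$ (by the orientation rule), deduces $|\delta(A)|\geq 3$ from $3$-edge-connectedness of $K$, and concludes that $F=\bigcup\{\delta(A):(A,B)\in\sigma,\ |\delta(A)|\geq 3\}$ disconnects every slab, contradicting $|F|<a$.

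\medskip
\textbf{The hard direction $(A3)\Rightarrow(A1)$ is only an outline, and a different one.} The paper does \emph{not} run an induction on $|V(G)|$ with side-contractions. Instead it first proves a structural reduction (Theorem~\ref{thm:torsos-width}): $G$ has a decomposition of the required widths iff every torso $\torso(A)$ of a $3$-edge-connected component does. This uses the cactus structure of $G_{\tcc}$ and careful trace bookkeeping. Having localised the obstruction to a single $3$-edge-connected torso $G_A$, the paper runs a Mazoit-style argument there: define ``good'' decompositions (adhesion-width $<a$, large bags only at leaves), take an inclusion-minimal upward-closed family $\F$ of petals meeting every good decomposition, and prove via a Menger-based gluing of two witness decompositions that the members of $\F$ pairwise intersect. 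The bramble is then $\{(H,V(H)):H\text{ connected},\ V(H)\in\F\}$, lifted back to $G$ by expanding replacement edges. Your sketch (laminar family, contract a side, recurse, glue) is closer in spirit to the abstract Diestel--Oum machinery the paper explicitly decides \emph{not} to invoke; more importantly, you do not say how the absence of a bramble is actually used to produce the splitting separation, nor how a bramble in a contracted side-graph pulls back to one in $G$ (the $3$-edge-connectedness of cores need not survive contraction), nor how the thin/bold bookkeeping is resolved---you flag this last point yourself as ``the main obstacle'' and leave it open. As written, this is a plan rather than a proof.
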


When cast to the variants of definitions with a single parameter, Theorem~\ref{thm:trinity} takes the following form.

\begin{corollary}
	For any graph $G$ and a positive integer $k$, the following are equivalent:
	\begin{enumerate}[label=(B\arabic*),ref=(B\arabic*),leftmargin=*]
		\item $G$ has a bramble of order $\geq k$;
		\item $G$ has a tangle of order $\geq k$;
		\item $G$ has ab-tree-cut width $\geq k$.
	\end{enumerate}
\end{corollary}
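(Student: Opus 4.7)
The plan is to obtain this corollary as an immediate specialization of Theorem~\ref{thm:trinity} with the choice $a = b = k$. The reason this particular choice works is that each of the three quantities in the corollary is the ``diagonal'' contraction of its biparametric counterpart: the order of a bramble (resp.\ tangle) is the \emph{minimum} of its adhesion-order and bag-order, and the ab-tree-cut width is the least $k$ for which there exists a decomposition with both adhesion-width and bag-width bounded by $k$.

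In more detail, I would first unpack the definitions. A bramble $\B$ has order $\geq k$ if and only if $\min(\text{adhesion-order}(\B), \text{bag-order}(\B)) \geq k$, which is in turn equivalent to $\B$ having adhesion-order $\geq k$ and bag-order $\geq k$ simultaneously; so (B1) is exactly \ref{s:bramble} with $a = b = k$. The same rewriting applies verbatim to tangles, yielding the equivalence of (B2) with \ref{s:tangle} at $a = b = k$. For the decomposition side, by Definition~\ref{def:tctw-our}, the ab-tree-cut width of $G$ is the least $k$ such that $G$ admits a tree-cut decomposition with adhesion-width $\leq k$ and bag-width $\leq k$; hence ab-tree-cut width $\geq k$ is equivalent to the non-existence of a tree-cut decomposition with both adhesion-width $\leq k-1$ and bag-width $\leq k-1$, that is, with both $<k$. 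This is precisely \ref{s:decomp} at $a = b = k$.

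Having matched the three clauses pairwise, the corollary follows at once from Theorem~\ref{thm:trinity} restricted to the diagonal $a = b = k$. There is no substantive obstacle here: all the combinatorial content is already discharged by the biparametric theorem, and the corollary is merely its convenient single-parameter repackaging. The one expository point worth flagging is that the biparametric form is strictly more informative, since it also captures off-diagonal regimes where the two parameters differ; in particular, one could not recover Theorem~\ref{thm:trinity} from this corollary alone.
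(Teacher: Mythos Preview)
Your proposal is correct and matches the paper's approach: the paper states the corollary immediately after Theorem~\ref{thm:trinity} without proof, treating it as the obvious specialization obtained by taking $a=b=k$, and your unpacking of the three definitions is exactly the routine verification one would supply if asked to write it out.
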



\section{Proof of Theorem~\ref{thm:trinity}}\label{sec:proof}

In this section we prove Theorem~\ref{thm:trinity}.
The outline is as follows.
First, we verify that all the three statements always hold for $b=1$, which proves the theorem in this case. Hence, for the remainder of the proof we may assume that $b\geq 2$. Then, in successive subsections we prove implications \ref{s:bramble}$\Rightarrow$\ref{s:tangle}, \ref{s:tangle}$\Rightarrow$\ref{s:decomp}, and \ref{s:decomp}$\Rightarrow$\ref{s:bramble}, in this order. The first two implications are rather straightforward, while the main weight of the proof lies in the last implication. In particular, the subsection containing its proof is preceded by an analysis of the structure of $3$-edge-connected components in an arbitrary graph.

\begin{proof}[Proof of Theorem~\ref{thm:trinity} for $b=1$]
    We show that when $b=1$, all three conditions \ref{s:bramble}, \ref{s:tangle}, and~\ref{s:decomp} hold for every positive integer $a$.

    For \ref{s:bramble}, fix some vertex $v$ and consider the bramble consisting of one slab $(H,K)$ where $K=\{v\}$ and $H=G[\{v\}]$.
	This bramble has bag-order $1$ and adhesion-order $+\infty$. 
	Hence \ref{s:bramble} holds for $b=1$ and any positive integer $a$.

	For \ref{s:tangle}, we argue similarly. Let $a$ be any positive integer. Fix any vertex $v$ and consider the orientation $\Ll$ of $\S_a$ defined as follows: for $\{A,B\}\in \S_a$, we include $(A,B)$ in $\Ll$ provided $v\in B$, and otherwise we include $(B,A)$. Clearly $\Ll$ is consistent and avoids $\Sigma_{a,1}$, because every separation in $\Ll$ points towards the side that contains $v$. So $\Ll$ is an $(a,1)$-tangle, as~required.
	
	Finally, \ref{s:decomp} trivially holds for $b=1$ and any positive integer $a$, because $G$ is assumed to be non-empty.
\end{proof}

Therefore, for the remainder of the proof we assume that $b\geq 2$.

\subsection{From a bramble to a tangle}

\begin{proof}[Proof of Theorem~\ref{thm:trinity}, \ref{s:bramble}$\Rightarrow$\ref{s:tangle}]
	Let $\B$ be a bramble of adhesion-order $\geq a$ and bag-order $\geq b$, where $b\geq 2$.
	
	We claim that for each separation $\{A,B\}$ in $\S_a$, exactly one of the sides $A$ and $B$ contains the core $K$ of at least one slab $(H,K)\in \B$ as a subset.
	Indeed, if none of $A$ and $B$ contained such a core, then $\delta(A)$ would be a disconnecting set for $\B$. Since $|\delta(A)|<a$, this is a contradiction.
	On the other hand, if both $A$ and $B$ contained cores of some slabs from $\B$, then these cores would not intersect, again a contradiction.
	
	We construct a tangle $\Ll$ in $G$ by orienting every separation $\{A,B\}\in \S_a$ so that it points to the part that contains the core of at least one slab from $\B$.
	Note that $\Ll$ is consistent, because otherwise $\B$ would contain two slabs with disjoint cores. It remains to prove that $\Ll$ avoids~$\Sigma_{a,b}$.

	For contradiction, suppose that there exists a star $\sigma\subseteq \Ll$ such that 
	\begin{align}
		\big|\bigcup \{ \delta(A) \colon (A,B)\in \sigma \text{ and } |\delta(A)|\geq 3\}\big|&<a,\label{eq:beaver}\\
		\big|\bigcap \{ B \colon (A,B)\in \sigma\}\big|&<b.\label{eq:otter}
	\end{align}
	Let $$F\coloneqq \bigcup \{ \delta(A) \colon (A,B)\in \sigma \text{ and } |\delta(A)|\geq 3\}.$$
	Consider any slab $(H,K)\in \B$.
	Since the bag-order of $\B$ is at least $b$, we have $|K|\geq b$.
	By~\eqref{eq:otter}, there exists a separation $(A,B)\in \sigma$ such that $A$ and $K$ intersect. Note that it cannot happen that $K\subseteq A$, because $(A,B)\in \Ll$, so $A$ cannot fully contain any core of a slab from $\B$. We infer that $\delta(A)$ disconnects the slab $(H,K)$. Since $K$ is $3$-edge-connected in $G$, this in particular implies that $|\delta(A)|\geq 3$. So $\delta(A)\subseteq F$ and $F$ disconnects $(H,K)$. Since the slab $(H,K)$ was chosen arbitrarily from $\B$, we conclude that $F$ is a disconnecting set for $\B$. However, now~\eqref{eq:beaver} stands in contradiction with the assumption that the adhesion-order of $\B$ is at least $a$.
\end{proof}

\subsection{From a tangle to the non-existence of a decomposition}

\begin{proof}[Proof of Theorem~\ref{thm:trinity}, \ref{s:tangle}$\Rightarrow$\ref{s:decomp}]
	Suppose toward a contradiction that $G$ admits a tree-cut decomposition $\T=(T,\X)$ with $\adhw(\T)<a$ and $\bagw(\T)<b$, as well as a tangle $\Ll$ with adhesion-order $\geq a$ and bag-order $\geq b$.
	
	First, note that for every edge $ab \in E(T)$ we can define a separation $\{A,B\}$ of $G$, where $A$ consists of all vertices $u$ such that the node of $T$ whose bag contains $u$ lies in the same connected component of $T-ab$ as $a$, and $B=V(G)\setminus A$. Since $\adhw(\T)<a$, it follows that the order of $\{A,B\}$ is smaller than $a$, that is, $\{A,B\}\in \S_a$. 
	Hence, exactly one of $(A,B)$ and $(B,A)$ belongs to~$\Ll$. 
	Define an orientation $\vec{T}$ of the tree $T$ as follows: orient $ab$ towards $b$ if $(A,B) \in \Ll$, and towards $a$ otherwise.
	Since $T$ has less edges than nodes, it follows that in $\vec{T}$ there exists a node $r$ whose outdegree in $\vec{T}$ is zero.
	In other words, every edge incident to $r$ in $T$ points toward $r$ in $\vec{T}$.
	Thus, the set $\sigma$ of oriented separations corresponding to the edges incident to $r$ forms a star in $\S_a$.
	Since $T$ has adhesion-width $<a$ and bag-width $<b$, we immediately get that $\sigma\in \Sigma_{a,b}$. Hence $\Ll$ does not avoid $\Sigma_{a,b}$, a contradiction.
\end{proof}

\subsection{Structure of $3$-edge-connected components}\label{sec:3cc}

Before we proceed to the proof of the last implication, we need to prove some auxiliary results about tree-cut decompositions of a graph and its $3$-edge-connected components.

Consider a graph $G$ and let $\tccrel$ be the equivalence relation on $V(G)$ defined as being in the same $3$-edge-connected component.
We define
$$G_{\tcc}\coloneqq \quo{G}{\tccrel}.$$
Our goal now is to understand the structure of $G_{\tcc}$. First, we observe that $G_{\tcc}$ has no non-trivial $3$-edge-connected components.

\begin{lemma}\label{lem:tcc-idempotent}
 In $G_{\tcc}$, no two different vertices are $3$-edge-connected.
\end{lemma}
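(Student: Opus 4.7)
The plan is to argue by contradiction. I would assume that $A, B \in V(G_{\tcc})$ are two distinct vertices that are $3$-edge-connected in $G_{\tcc}$, pick arbitrary representatives $u \in A$ and $v \in B$, and show that $u$ and $v$ must then be $3$-edge-connected in $G$ as well. Since $A \neq B$ means that $u$ and $v$ lie in different equivalence classes of $\tccrel$, this will yield the desired contradiction. By Menger's Theorem, the required $3$-edge-connectivity of $u$ and $v$ in $G$ reduces to showing that $|\delta_G(T)| \geq 3$ for every $T \subseteq V(G)$ with $u \in T$ and $v \notin T$.

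The key ingredient is a straightforward translation between cuts in $G$ and cuts in $G_{\tcc}$: whenever $T \subseteq V(G)$ is a union of $\tccrel$-classes and $S \subseteq V(G_{\tcc})$ is the set of those classes, the very definition of the quotient graph yields a natural bijection between $\delta_G(T)$ and $\delta_{G_{\tcc}}(S)$, respecting multiplicities of parallel edges. This is because the edges of $G_{\tcc}$ are by construction in bijection with those edges of $G$ whose endpoints lie in distinct classes, and this correspondence restricts to the two cuts.

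With this tool in hand, I would perform a short case analysis on an arbitrary $T$ as above. If $T$ splits some $\tccrel$-class $X$, meaning $X \cap T \neq \emptyset$ and $X \setminus T \neq \emptyset$, then picking $x_1 \in X \cap T$ and $x_2 \in X \setminus T$, the relation $x_1 \tccrel x_2$ supplies three edge-disjoint $x_1$--$x_2$ paths in $G$, each of which must traverse $\delta_G(T)$; hence $|\delta_G(T)| \geq 3$. Otherwise, $T$ is a union of classes, and for the corresponding subset $S \subseteq V(G_{\tcc})$ we have $A \in S$ and $B \notin S$ (because $u \in T$ and $v \notin T$). Thus $\delta_{G_{\tcc}}(S)$ separates $A$ from $B$ in $G_{\tcc}$, and the translation above together with the hypothesized $3$-edge-connectivity of $A$ and $B$ gives $|\delta_G(T)| = |\delta_{G_{\tcc}}(S)| \geq 3$.

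I do not anticipate any substantive obstacle here: the only mild point requiring attention is tracking multiplicities of parallel edges correctly in the cut-to-cut bijection, which is however immediate from the way $G_{\tcc}$ is constructed. Once that bijection is set up, the whole argument is a two-case check followed by one appeal to Menger's Theorem.
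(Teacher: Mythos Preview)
Your proposal is correct and rests on the same core observation as the paper's proof: the natural bijection between edge-cuts in $G_{\tcc}$ and class-respecting edge-cuts in $G$. The paper's argument is the contrapositive of yours and is slightly more economical---rather than checking every $T$ and splitting into two cases, it picks one separation $\{A',B'\}$ of $G$ of order $<3$ witnessing $u\not\tccrel v$, observes that any such small cut automatically respects the $\tccrel$-classes (this absorbs your Case~1), and pushes it forward to a separation of $G_{\tcc}$ of the same order separating $A$ from $B$. Your route is equally valid; it just does a little more work by quantifying over all cuts.
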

\begin{proof}
 Consider any two different vertices $A$ and $B$ of $G_{\tcc}$. Recall that $A$ and $B$ are two different $3$-edge-connected components of $G$, hence let us pick arbitrary vertices $a\in A$ and $b\in B$. Since $a$ and $b$ belong to different $3$-edge-connected components of $G$, there exists a separation $\{A',B'\}$ of $G$ of order $<3$ such that $a\in A'$ and $b\in B'$. Note that since the order of $\{A',B'\}$ is $<3$, every $3$-edge-connected component $C$ of $G$ has to be entirely contained either in $A'$ or in $B'$. In particular, $A\subseteq A'$ and $B\subseteq B'$. Thus, $\{A',B'\}$ naturally induces a separation $\{\widehat{A},\widehat{B}\}$ of $G_{\tcc}$ defined by placing each $C\in V(G_{\tcc})$ in $\widehat{A}$ provided $C\subseteq A'$, and in $\widehat{B}$ provided $C\subseteq B'$. Clearly, $A\in \widehat{A}$, $B\in \widehat{B}$, and the order of $\{\widehat{A},\widehat{B}\}$ is the same as of $\{A',B'\}$. This means that $\{\widehat{A},\widehat{B}\}$ witnesses that $A$ and $B$ are not $3$-edge-connected in $G_{\tcc}$.
\end{proof}

It appears that the structure of graphs satisfying the condition stated in Lemma~\ref{lem:tcc-idempotent} can be nicely described: they are {\em{cacti}}. More precisely, a {\em{cactus}} is a graph where every $2$-(vertex)-connected component is either a single edge or a cycle (here, we allow cycles of length $2$, that is, pairs of parallel edges). We have the following observation, which is essentially (up to technical details in definitions) known in the literature~\cite{dinits1976structure,mehlhorn2017certifying}.

\begin{lemma}\label{lem:cactii-characterization}
 A graph $G$ is a cactus if and only if no two different vertices of $G$ are $3$-edge-connected.
\end{lemma}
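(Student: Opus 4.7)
The plan is to verify both directions by analyzing the block (maximal $2$-vertex-connected subgraph) decomposition of $G$. The forward implication will be a short case analysis, while the backward implication rests on the open ear decomposition of $2$-vertex-connected multigraphs.

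For the forward direction, assume $G$ is a cactus and fix distinct vertices $u,v\in V(G)$. I would consider the unique path from $u$ to $v$ in the block-cut tree of $G$ and pick any block $B$ along it. The block $B$ comes with two distinguished vertices $x\neq y$ --- namely the cut vertices (or $u$ and $v$ themselves) at which the path enters and leaves $B$ --- and every $u$-$v$ path in $G$ must traverse $B$ along some $x$-$y$ subpath. Since $B$ is a single edge or a cycle, I can disconnect $x$ from $y$ inside $B$ by removing at most $2$ edges: the single edge itself if $B$ is a single edge, or one edge from each of the two arcs of $B$ joining $x$ to $y$ if $B$ is a cycle. Such a set is an edge cut of size at most $2$ separating $u$ from $v$ in $G$, so $u$ and $v$ are not $3$-edge-connected.

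For the backward direction, assume no two distinct vertices of $G$ are $3$-edge-connected and let $B$ be any block of $G$. If $|V(B)|=2$, then $B$ consists solely of parallel edges between its two vertices; if there were at least $3$ such parallel edges, these two vertices would be connected by three edge-disjoint single-edge paths, contradicting the hypothesis. Hence $B$ is a single edge or a length-$2$ cycle. If $|V(B)|\geq 3$, then $B$ is $2$-vertex-connected and admits an open ear decomposition $B=C_0\cup P_1\cup\cdots\cup P_k$, in which $C_0$ is a cycle and each $P_i$ is a path whose endpoints lie in $C_0\cup P_1\cup\cdots\cup P_{i-1}$ and whose interior vertices are new. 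If $k=0$, then $B=C_0$ is already a cycle; otherwise, letting $x,y$ be the two endpoints of $P_1$ on $C_0$, the two arcs of $C_0$ between $x$ and $y$ together with $P_1$ form three pairwise edge-disjoint $x$-$y$ paths in $G$, witnessing that $x$ and $y$ are $3$-edge-connected and contradicting the hypothesis. So every block of $G$ is a single edge or a cycle, i.e.\ $G$ is a cactus.

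The main obstacle I anticipate is the clean treatment of parallel edges on both sides: in the forward direction, checking that the two distinguished vertices $x,y$ in the chosen block are genuinely distinct (which ultimately uses that the block-cut tree is a tree and that a block and a cut vertex meet in a single vertex), and in the backward direction, ensuring the ear-decomposition argument goes through even when the initial cycle $C_0$ is itself a length-$2$ cycle of parallel edges. In that degenerate case the same three-path argument applies verbatim, provided one adopts the convention that length-$2$ cycles are admissible as the initial cycle of an open ear decomposition, so no separate case analysis is needed.
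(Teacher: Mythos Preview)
Your argument is correct in both directions, but it follows a different route from the paper's.

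For the forward direction, the paper argues by contradiction: assuming three edge-disjoint $u$--$v$ paths $P_1,P_2,P_3$, it looks at the three edges leaving $u$ and uses the closed walks $P_i\cup P_j$ to show that $u$ has degree $\geq 3$ inside a single $2$-connected component, contradicting the cactus property. You instead exhibit the small edge cut explicitly, by picking a block $B$ on the $u$--$v$ path in the block--cut tree and cutting the at most two edges that separate the entry and exit vertices $x,y$ inside $B$. Your approach is more constructive but relies on routine block--cut tree facts (that every $u$--$v$ path in $G$ restricts to an $x$--$y$ walk inside $B$); the paper's approach avoids the block--cut tree entirely.

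For the backward direction, the paper takes a block $C$ that is neither an edge nor a cycle, finds a vertex $u$ of degree $\geq 3$ in $C$, and uses the connectedness of $C-u$ to locate a theta-subgraph on $u$ and one other vertex $v$, yielding three edge-disjoint $u$--$v$ paths. You instead invoke Whitney's open ear decomposition: if the block is not already a cycle, the first ear $P_1$ together with the two arcs of $C_0$ between its endpoints gives the three paths directly. Your argument is cleaner but uses a heavier structural theorem as a black box, while the paper's is entirely elementary.

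In short: both proofs are of comparable length; yours is more structural (two decomposition theorems do the work), while the paper's is self-contained and hands-on. Your handling of parallel edges is fine in both directions, and the concerns you flag in the last paragraph are indeed routine.
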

\begin{proof}
 Assume first that $G$ is a cactus. Take any two different vertices $u,v$ of $G$ and for contradiction suppose that there are three edge-disjoint paths $P_1,P_2,P_3$ connecting $u$ and $v$. Let $u_1,u_2,u_3$ be the vertices directly succeeding $u$ on $P_1,P_2,P_3$, respectively. Within the closed walk $P_1\cup P_2$ one can find a cycle that contains $u$, $u_1$, and $u_2$, which witnesses that all these three vertices belong to the same $2$-connected component of $G$. The same can be argued about the triple $u$, $u_2$, and $u_3$, so all the four vertices $u$, $u_1$, $u_2$, and $u_3$ belong to the same $2$-connected component of $G$. However, the first edges of $P_1,P_2,P_3$ are pairwise different, which means that $u$ has degree at least $3$ within this $2$-connected component. This contradicts the assumption that $G$ is a cactus.
 
 Assume now that $G$ is not a cactus, which means that there exists a $2$-connected component $C$ of $G$ that is neither a single edge nor a cycle. This means that $C$ has a vertex of degree $3$, say~$u$. Let $e_1,e_2,e_3$ be an arbitrary triple of distinct edges of $C$ incident to $u$, and let $u_1,u_2,u_3$ be the other endpoints of $e_1,e_2,e_3$, respectively. Since $C$ is $2$-connected, $C-u$ is connected, hence in $C-u$ there exists a tree $T$ whose set of leaves is either $\{u_1,u_2,u_3\}$, or consist of two elements among $\{u_1,u_2,u_3\}$ with the third vertex (call it $u'$) lying in a path between the other two in $T$. Now $T$ with vertex $u$ and edges $e_1,e_2,e_3$ added is a graph consisting of $u$, another vertex $v$ of degree $3$ (in the first case: a leaf of $T$ if two of $u_1, u_2, u_3$ are equal, or an internal node of $T$ if they are pairwise distinct; $u'$ in the second case), and $3$ internally vertex-disjoint paths connecting $u$ and $v$. Since this graph is a subgraph of $G$, it follows that $u$ and $v$ are $3$-edge-connected in~$G$.
\end{proof}

By combining Lemmas~\ref{lem:tcc-idempotent} and~\ref{lem:cactii-characterization} we get the following.

\begin{corollary}\label{cor:cactus}
	For any graph $G$, the graph $G_{\tcc}$ is a cactus.
\end{corollary}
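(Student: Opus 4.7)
The proof plan is essentially a one-line application of the two preceding lemmas, which together form a matched pair: Lemma~\ref{lem:tcc-idempotent} establishes a structural property of $G_{\tcc}$, while Lemma~\ref{lem:cactii-characterization} characterizes graphs with exactly that property as cacti. I would simply invoke the characterization of Lemma~\ref{lem:cactii-characterization} for the graph $G_{\tcc}$ (treating $G_{\tcc}$ as the ``$G$'' of that lemma statement). The hypothesis of the ``if'' direction of Lemma~\ref{lem:cactii-characterization} — that no two different vertices are $3$-edge-connected — is verified by Lemma~\ref{lem:tcc-idempotent} applied to the original graph $G$. Hence $G_{\tcc}$ is a cactus.

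There is no real obstacle here: the corollary is a formal composition of the two lemmas, and no further case analysis or construction is needed. The only point worth double-checking while writing is that Lemma~\ref{lem:cactii-characterization} is stated for arbitrary graphs (and the definitions of cactus and of $3$-edge-connectedness used in that lemma allow parallel edges and cycles of length $2$, so that its application to the multigraph $G_{\tcc}$ causes no subtlety). Once that is observed, the proof is essentially a single sentence: ``By Lemma~\ref{lem:tcc-idempotent} applied to $G$, no two distinct vertices of $G_{\tcc}$ are $3$-edge-connected in $G_{\tcc}$; hence by Lemma~\ref{lem:cactii-characterization} applied to $G_{\tcc}$, the graph $G_{\tcc}$ is a cactus.''
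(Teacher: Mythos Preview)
Your proposal is correct and matches the paper's approach exactly: the paper simply states that the corollary follows by combining Lemmas~\ref{lem:tcc-idempotent} and~\ref{lem:cactii-characterization}, which is precisely the one-line composition you describe.
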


We now relate the ab-tree-cut width of a graph $G$ with the ab-tree-cut width of its $3$-edge-connected components. For this, we need to associate with each $3$-edge-connected component $A$ a suitable {\em{torso}} of $A$, which is a graph that reflects connections between vertices of $A$ that are realized either by edges within $A$ or paths that are internally disjoint with $A$. Formally, the torso of $A$ is the graph $\torso(A)$ obtained from $G$ as follows.
Observe that for every connected component $Z$ of $G-A$, there are at most two edges in $G$ having one endpoint in $Z$ and the other in $A$. 
Then, for every such component $Z$,
\begin{itemize}[nosep]
 \item remove $Z$ completely, provided $Z$ has at most one neighbor in $A$; or
 \item remove $Z$ and replace it with a new edge $f_Z$ connecting the neighbors of $Z$ in $A$, provided $Z$ has exactly two neighbors in $A$. 
\end{itemize}
Note that this second operation does not create loops.
The edge $f_Z$ is called the {\em{replacement edge}} of the component $Z$.

We have the following simple observations about the torso operation.

\begin{lemma}\label{lem:torso-immersion}
 For every graph $G$ and a $3$-edge-connected component $A$ of $G$, the graph $G$ contains $\torso(A)$ as an immersion.
\end{lemma}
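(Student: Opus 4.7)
The plan is to construct an explicit immersion model $\pi$ of $\torso(A)$ in $G$, taking the identity on vertices of $A$ and routing each replacement edge through the corresponding connected component of $G-A$.

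First I would set $\pi(v)\coloneqq v$ for every $v\in V(\torso(A))=A$; this is manifestly injective. Next I would split the edges of $\torso(A)$ into two classes: the \emph{original} edges, which are the edges of $G[A]$, and the \emph{replacement} edges $f_Z$, one for each connected component $Z$ of $G-A$ having exactly two neighbors in $A$. For an original edge $e$, I would let $\pi(e)$ be the single-edge path consisting of $e$ itself; in case $\torso(A)$ contains multiple parallel copies of an edge between two vertices of $A$ (which happens precisely when $G$ does), I would fix a bijection between these copies and the corresponding parallel edges of $G$ to guarantee distinct images. For a replacement edge $f_Z$ with endpoints $u_Z,v_Z\in A$, the definition of $\torso(A)$ supplies the two $G$-edges $e_u,e_v$ connecting $u_Z,v_Z$ to (possibly equal) vertices $u'_Z,v'_Z\in V(Z)$; since $Z$ is connected, I can pick any path $P_Z$ in $Z$ from $u'_Z$ to $v'_Z$ (trivial if $u'_Z=v'_Z$) and set $\pi(f_Z)$ to be the concatenation $u_Z\,e_u\,P_Z\,e_v\,v_Z$. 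Note that $u_Z\neq v_Z$, since the construction of $\torso(A)$ explicitly avoids creating loops, so this is a legitimate path.

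It then remains to verify that the family $\{\pi(e):e\in E(\torso(A))\}$ is pairwise edge-disjoint. Images of original edges lie entirely in $G[A]$ and are pairwise distinct by the matching fixed above. Each replacement path $\pi(f_Z)$ uses only edges with at least one endpoint in $V(Z)\subseteq V(G)\setminus A$, hence shares no edge with any image of an original edge. Finally, for distinct components $Z\neq Z'$ of $G-A$ the vertex sets $V(Z)$ and $V(Z')$ are disjoint and both disjoint from $A$, so the edge sets used by $\pi(f_Z)$ and by $\pi(f_{Z'})$ are disjoint as well.

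I do not anticipate any genuine obstacle: the argument is essentially a bookkeeping exercise. The only mildly delicate point is the treatment of parallel edges inside $G[A]$, which is resolved cleanly by the bijective matching between parallel copies in $\torso(A)$ and in $G$ indicated in the second paragraph.
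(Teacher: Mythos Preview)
Your proposal is correct and follows exactly the same approach as the paper's proof, which is stated very tersely: map every vertex of $A$ to itself, every edge of $G[A]$ to itself, and every replacement edge $f_Z$ to a path in $G$ connecting the endpoints of $f_Z$ with all internal vertices in $Z$. Your write-up is simply a more detailed unpacking of this, including the explicit edge-disjointness check that the paper leaves implicit.
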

\begin{proof}
 It suffices to map every vertex of $A$ to itself, every edge of $G[A]$ to itself, and every replacement edge $f_Z$ to any path in $G$ that connects the endpoints of $f_Z$ and has all the internal vertices in $Z$.
\end{proof}

\begin{lemma}\label{lem:torso-tcc}
 For every graph $G$ and a $3$-edge-connected component $A$ of $G$, the graph $\torso(A)$ is $3$-edge-connected.
\end{lemma}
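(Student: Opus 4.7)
The goal is to show that any two vertices of $\torso(A)$ are connected by three edge-disjoint paths. Note first that $V(\torso(A))=A$ by construction. If $|A|=1$ the statement is vacuous, so assume $|A|\geq 2$ and fix two arbitrary vertices $u,v\in A$. Since $A$ is a $3$-edge-connected component of $G$, Menger's theorem gives three pairwise edge-disjoint $u$--$v$ paths $P_1,P_2,P_3$ in $G$. The plan is to transform them into three edge-disjoint $u$--$v$ walks in $\torso(A)$, from which one can extract the desired paths.

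The crucial combinatorial fact, already used in the definition of $\torso(A)$, is that for every connected component $Z$ of $G-A$ there are at most two edges of $G$ with one endpoint in $Z$ and the other in $A$. Consequently, across all three paths $P_1,P_2,P_3$ combined, at most two edges incident to $Z$ are used; since each path that enters $Z$ must also leave $Z$, this means at most one path ever visits $Z$, and that path enters and exits $Z$ exactly once.

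Given this, I define the walks $W_i$ in $\torso(A)$ from $P_i$ as follows. Traversing $P_i$, whenever it enters a component $Z$ of $G-A$, consider the maximal subwalk $Q$ of $P_i$ contained in $V(Z)\cup A$ that starts and ends in $A$. If $Z$ has two neighbors $a_1,a_2$ in $A$, then $Q$ goes from $a_1$ to $a_2$ through $Z$, and I replace $Q$ by the single replacement edge $f_Z$. If $Z$ has at most one neighbor $a$ in $A$, then $Q$ is a closed walk at $a$, and I delete it (i.e., the walk stays at $a$). All edges of $P_i$ lying in $G[A]$ are retained, since they are edges of $\torso(A)$. This yields a $u$--$v$ walk $W_i$ in $\torso(A)$. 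Edge-disjointness of $W_1,W_2,W_3$ follows because edges of $G[A]$ were edge-disjoint among the $P_i$, and each replacement edge $f_Z$ appears in at most one $W_i$ by the observation above (only one $P_i$ ever visits $Z$). Extracting a simple $u$--$v$ path from each edge-disjoint walk preserves edge-disjointness in the standard way, giving three edge-disjoint $u$--$v$ paths in $\torso(A)$.

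The only technical obstacle is the careful case analysis in the replacement step, in particular verifying that a component $Z$ with a single neighbor in $A$ cannot be traversed by two different paths (so shortcutting it does not collide with edges used elsewhere); this is handled by the ``at most two edges between $Z$ and $A$'' bound together with the entry/exit parity argument. Everything else is routine.
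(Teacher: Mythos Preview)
Your proposal is correct and follows essentially the same approach as the paper: take three edge-disjoint $u$--$v$ paths in $G$ guaranteed by the $3$-edge-connectedness of $A$, and project each one to $\torso(A)$ by replacing maximal excursions into a component $Z$ of $G-A$ by the replacement edge $f_Z$, using the fact that at most two edges join $Z$ to $A$ to conclude that each $f_Z$ is used at most once. The paper's write-up is terser and does not separately discuss the ``one neighbor'' case or the walk-to-path extraction (in fact, since the $P_i$ are simple paths with both endpoints in $A$, a component $Z$ with a single neighbor in $A$ is never entered, so that case is vacuous), but the substance is identical.
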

\begin{proof}
 Consider any pair of vertices $u,v\in A$. Since $A$ is $3$-edge-connected in $G$, there are three edge-disjoint paths $P_1,P_2,P_3$ in $G$ that connect $u$ and $v$. These can be naturally projected to paths $Q_1,Q_2,Q_3$ in $\torso(A)$ as follows: for every maximal subpath of $P_i$, $i\in \{1,2,3\}$, whose all internal vertices do not belong to $A$, say they belong to some connected component $Z$ of $G-A$, replace this subpath with the replacement edge $f_Z$. It can be easily seen that each connected component $Z$ of $G-A$ will participate in at most one such replacement, because there are at most two edges connecting $Z$ with $A$. Hence, $Q_1,Q_2,Q_3$ remain edge-disjoint and $u$ and $v$ are $3$-edge-connected in $\torso(A)$.
\end{proof}

The next theorem is the main outcome of this section. Intuitively, it will allow us to focus on a single $3$-edge-connected component of $G$ when constructing a bramble of high order. A statement in the work of Wollan that mirrors this step is~\cite[Lemma~5]{wollan2015structure}.

\begin{theorem}\label{thm:torsos-width}
	Let $G$ be a graph and $a,b$ be two positive integers.
	Then $G$ admits a tree-cut decomposition of adhesion-width $\leq a$ and bag-width $\leq b$ if and only if for every $3$-edge-connected component $A$ of $G$, graph $\torso(A)$ admits a tree-cut decomposition of adhesion-width $\leq a$ and bag-width $\leq b$.
\end{theorem}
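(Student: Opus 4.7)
My plan is to prove the two directions of the biconditional separately. For the forward direction ($\Rightarrow$), the argument is immediate: by Lemma~\ref{lem:torso-immersion} each $\torso(A)$ is an immersion of $G$, so by Proposition~\ref{prop:tree-cut-immersion} any tree-cut decomposition of $G$ with adhesion-width $\leq a$ and bag-width $\leq b$ transfers to each $\torso(A)$.

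For the backward direction ($\Leftarrow$), I would assume tree-cut decompositions $\T_A=(T_A,\X_A)$ of each $\torso(A)$ with the desired width bounds, and explicitly build a decomposition $\T=(T,\X)$ of $G$ with the same widths. By Corollary~\ref{cor:cactus}, $G_{\tcc}$ is a cactus. I would first construct a \emph{skeleton tree} $T^{\dagger}$ on the vertex set of $G_{\tcc}$ (the $3$-edge-connected components of $G$): for every bridge block $\{A,A'\}$ of $G_{\tcc}$ add the edge $A-A'$, and for every cycle block traversing $A_{i_1},\ldots,A_{i_k}$ add the edges $A_{i_1}-A_{i_2},\ldots,A_{i_{k-1}}-A_{i_k}$, thus spanning each cycle by a path. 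A straightforward edge count confirms that $T^{\dagger}$ is a tree. Then I would produce $T$ by replacing each node $A\in V(T^{\dagger})$ with the tree $T_A$ and rerouting each skeleton edge $A-A'$ to run between the bag of $T_A$ containing the $A$-endpoint and the bag of $T_{A'}$ containing the $A'$-endpoint of the corresponding $G$-edge. Since the bags of $T$ are inherited from the $\X_A$, we have $\bagw(\T)\leq b$ immediately.

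For the adhesion-width bound, the plan is to define, for each component $A$, a global injection $\phi\colon E(\torso(A))\to E(G)$ that is the identity on edges of $G[A]$ and sends each replacement edge $f_Z$, arising from a component $Z$ of $G-A$ with two edges $a_1z_1,a_2z_2$ to $A$ and attached at the bag $t_1$ containing $a_1$, to the ``far'' edge $a_2z_2$. I would then show $\phi(\adh_{\T_A}(e))=\adh_{\T}(e)$ for every $e\in E(T_A)$, and separately verify that every skeleton edge of $T$ has adhesion of size at most $2$ (the bridge or cycle edge represented by that skeleton edge, together with any wrap-around edge of a cycle routed through it). Combined with the injectivity of $\phi$, these yield $|\adh_{\T}(t)|=|\adh_{\T_A}(t)|\leq a$ for every $t\in V(T)$, since thin skeleton edges contribute nothing to any node's adhesion.

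The main obstacle is verifying $\phi(\adh_{\T_A}(e))=\adh_{\T}(e)$, which amounts to matching the geometry of traces in $T$ and in $T_A$. For a component $Z$ attached at $t_1$ with the second boundary vertex $a_2\in X_{t_2}$, a case analysis depending on whether $e\in E(T_A)$ lies on the $T_A$-path from $t_1$ to $t_2$ is expected to yield: if yes, only $a_2z_2$ crosses the cut of $e$ in $T$ (mirroring $f_Z$'s crossing in $T_A$); if no, neither $a_1z_1$ nor $a_2z_2$ crosses. An analogous analysis for the wrap-around edges of cycle blocks confirms that their traces in $T$ run exactly along the spanning skeleton path of each cycle, contributing the second edge to the adhesion of each cycle-path skeleton edge. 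Once these geometric matchings are in place, the widths of $\T$ follow directly.
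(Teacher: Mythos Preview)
The forward direction and the overall architecture of the backward direction match the paper: take a spanning tree of the cactus $G_{\tcc}$, replace each node $A$ by $T_A$, reconnect along the spanning-tree edges, and compare adhesions in $\T$ and $\T_A$ via a map $E(\torso(A))\to E(G)$.

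The gap is in your definition of $\phi$. You describe $f_Z$ as ``attached at the bag $t_1$ containing $a_1$'', which presupposes that the part of $T$ corresponding to $Z$ hangs off $T_A$ at a \emph{single} node. That holds only when $A$ is an endpoint of the spanning path you chose for the cycle $C$ corresponding to $Z$. When $A$ lies in the interior of that path, \emph{two} skeleton edges leave $T_A$ into $Z$, attached at the nodes $t_1$ and $t_2$ containing $a_1$ and $a_2$ respectively. In that situation the trace in $T$ of your ``far'' edge $a_2z_2$ is just the skeleton edge at $t_2$; it never enters $E(T_A)$, so $\phi(\adh_{\T_A}(e))=\adh_{\T}(e)$ fails for every $e$ on the $t_1$--$t_2$ path in $T_A$. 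Concretely, take a $4$-cycle $A\!-\!B_1\!-\!B_2\!-\!B_3\!-\!A$ in $G_{\tcc}$ and remove $B_2B_3$ to form the spanning path $B_2\!-\!B_1\!-\!A\!-\!B_3$: neither boundary edge $a_1b_1$ nor $a_2b_3'$ crosses any edge of $T_A$, yet $f_Z$ does.

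The remedy is exactly what the paper does: map $f_Z$ not to a boundary edge of $Z$ but to $\alpha(r_C)$, the $G$-edge corresponding to the one cycle edge you deleted when building the spanning tree. The trace of $\alpha(r_C)$ in $T$ threads through the entire spanning path of $C$, and its intersection with $E(T_A)$ is precisely the $t_1$--$t_2$ path, independently of where $A$ sits on the cycle. With this corrected map (the paper's $\eta^A$), your adhesion comparison goes through and the remainder of your argument is sound.
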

\begin{proof}
 The forward implication follows by combining Lemma~\ref{lem:torso-immersion} with Proposition~\ref{prop:tree-cut-immersion}. We are left with proving the backward implication.
 
 Assume then that for every $3$-edge-connected component $A$ of $G$, there is a tree-cut decomposition $\T^A=(T^A,\X^A)$ of $\torso(A)$ such that $\adhw(\T^A)\leq a$ and $\bagw(\T^A)\leq b$. The goal is to ``glue'' the decompositions $\T^A$ into a single tree-cut decomposition $\T$ of $G$ so that the guarantees about the width measures are preserved. The gluing will be done along the graph $G_{\tcc}$, which by Corollary~\ref{cor:cactus} is a cactus.
 
 We execute the gluing as follows. Fix any spanning forest of $G_{\tcc}$ and let $S$ be its edge set. Further, let $R=E(G_{\tcc})\setminus S$ be the remaining edges of $G_{\tcc}$. Note that $R$ contains exactly one edge from every $2$-connected component $C$ of $G_{\tcc}$ that is a cycle (that is, is not a single edge). Call this edge $r_C$.
 
 Recall that in the construction of $G_{\tcc}=\quo{G}{\tccrel}$, every edge of $G_{\tcc}$ originates in some edge of $G$ that connects two different $3$-edge-connected components. Let $\alpha\colon E(G_{\tcc})\to E(G)$ be this origin mapping: $\alpha(e)$ is the edge of $G$ from which $e$ originates. Note that $\alpha$ is injective.
 
 We construct a forest $T$ from trees $T^A$ as follows. First, take the disjoint union of trees~$T^A$. Then, consider every edge $e\in S$, say with endpoints $A,B\in V(G_{\tcc})$. Let $u$ and $v$ be the endpoints of $\alpha(e)$, where $u\in A$ and $v\in B$. Clearly, there are nodes $p\in V(T^A)$ and $q\in V(T^B)$ such that $u\in X^A_p$ and $v\in X^B_q$. Then add the edge $pq$ to the forest $T$, and call this edge $\gamma(e)$. This concludes the construction of $T$. It is easy to see that since $S$ is the edge set of a forest and each $T^A$ is a tree, $T$ is also a forest.
 
 We now associate the nodes of $T$ with bags $\X=\{X_s\colon s\in V(T)\}$ inherited from decomposition $\T^A$ in the natural manner: if a node $s$ of $T$ originates from the tree $T^A$, then we set $X_s\coloneqq X^A_s$. Since $3$-edge-connected components of $G$ form a partition of $V(G)$, it follows that $\X$ is a near-partition of $V(G)$. Thus, $\T\coloneqq (T,\X)$ is\footnote{Formally, we required tree-cut decompositions to be trees and not just forests, but we can always add arbitrary edges to make $T$ connected without increasing any of the width measures.} a tree-cut decomposition of $G$.
 
 It remains to analyze the width measures of $\T$. Since the bags are directly taken from decompositions $\T^A$, which have bag-width $\leq b$, we immediately see that $\bagw(\T)\leq b$. The argument for the adhesion-width is a bit more complicated, because the adhesions may actually change during gluing. 
 
 We first show the following claim about connectedness in $G_{\tcc}$ and in~$G$.
 
 \begin{claim}\label{cl:conn-tcc}
  Let $A$ be a $3$-edge-connected component of $G$ and let $C$ be a connected component of $G_{\tcc}-A$. Then the vertices of $\bigcup_{D\in V(C)} D$ lie in the same connected component of $G-A$.
 \end{claim}
 \begin{proof}
 Let $\widehat{C}\coloneqq \bigcup_{D\in V(C)} D$.
 Suppose for contradiction that there is a partition $\{L,R\}$ of $\widehat{C}$ such that in $G-A$ there is no edge with one endpoint in $L$ and second in $R$, and $L$, $R$ are non-empty. Since $C$ is connected in $G_{\tcc}$, there must exist a $3$-edge-connected component $B\in V(C)$ such that both $L\cap B$ and $R\cap B$ are non-empty, as otherwise we may find two adjacent $B,B'\in V(C)$ with $B\subseteq L$, $B'\subseteq R$, hence an edge in $G$ having one endpoint in $L$ and the other in $R$. Pick any $u\in L\cap B$ and $v\in R\cap B$. Since $B$ is $3$-edge-connected in $G$, there are three edge-disjoint paths $P_1,P_2,P_3$ in $G$ connecting $u$ and $v$. Note that in $G$ there are at most $2$ edges with one endpoint in $\widehat{C}$ and the other outside of $\widehat{C}$: these are the images of the at most two edges between $A$ and $C$ under $\alpha$. At most two of the paths $P_1,P_2,P_3$ can contain any of these at most two edges, hence one of them, say $P_1$, must have all the vertices contained in $\widehat{C}$. But then $P_1$ contains an edge with one endpoint in $L$ and second in $R$, a contradiction.
 \cqed\end{proof}

 Claim~\ref{cl:conn-tcc} provides us with an understanding of the replacement edges in torsos of the $3$-edge-connected components of $G$. Precisely, let $A$ be a $3$-edge-connected component of $G$ and let $C$ be any $2$-connected component of $G_{\tcc}$ that is a cycle and contains $A$. By Claim~\ref{cl:conn-tcc}, all the vertices of $\bigcup_{D\in V(C)\setminus \{A\}} V(D)$ lie in the same connected component of $G-A$. Call this component $Z^A(C)$. 
 Note that since $C$ ranges over all $2$-connected components of $G_{\tcc}$ that are cycles containing $A$, the components $Z^A(C)$ are pairwise different. Finally, if $e^1,e^2$ are the two edges of $C$ that are incident to $A$, then the replacement edge of $Z^A(C)$ in $\torso(A)$ connects the endpoints of $\alpha(e^1)$ and $\alpha(e^2)$ that lie in $A$, or is non-existent if these endpoints coincide. 
 
 Using all these observations we can understand the traces of edges of $G$ in the decomposition $\T$. 
 For an edge $e$ of $G$, by $\trace_{\T}(e)$ we mean the edge set of the trace of $e$ in $\T$. Similarly, if $e$ is an edge of $\torso(A)$, for some $3$-edge-connected component $A$, then $\trace_{\T^A}(e)$ is the edge set of the trace of $e$ in $\T^A$. The following claim explains how the traces of the edges of $G$ behave in $\T$. The proof is a straightforward check using the observations presented above, hence we omit it.
 It can be followed on Figure~\ref{fig:cactus}.

 \bigskip 

\begin{figure}
  \center
  \includegraphics[scale=1.2]{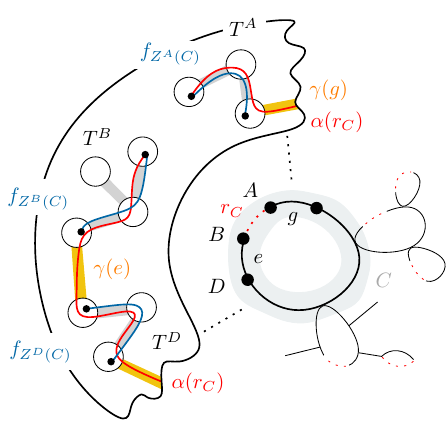}
  \caption{A close look at the trace of $\alpha(r_C)$ in the proof of Theorem~\ref{thm:torsos-width}.}\label{fig:cactus}
\end{figure}
 
 \begin{claim}\label{cl:traces}
  Let $e$ be an edge of $G$ with endpoints $u$ and $v$, and let $A$ and $B$ be the $3$-edge-connected components of $G$ such that $u\in A$ and $v\in B$. Then:
  \begin{itemize}[nosep]
   \item If $A=B$, then $$\trace_\T(e)=\trace_{\T^A}(e).$$
   \item If $A\neq B$ and $\alpha^{-1}(e)\in S$, then $$\trace_\T(e)=\{\gamma(\alpha^{-1}(e))\}.$$
   \item If $A\neq B$ and $\alpha^{-1}(e)\in R$, say $\alpha^{-1}(e)=r_C$ for some $2$-connected component $C$ of $G_{\tcc}$ that is a cycle, then 
   $$\trace_{\T}(e)=\bigcup_{D\in V(C)} \trace_{\T^D}\left(f_{Z^{D}(C)}\right)\cup \bigcup_{g\in E(C)\setminus \{e\}} \{\gamma(g)\}.$$
   Here, if the replacement edge $f_{Z^{D}(C)}$ does not exist, we take $\emptyset$ for the corresponding trace.
  \end{itemize}
 \end{claim}

 For a $3$-edge-connected component $A$ we define a mapping $\eta^A\colon E(\torso(A))\to E(G)$ as follows:
 \begin{itemize}[nosep]
  \item If $e$ is not a replacement edge, then set $\eta^A(e)=e$.
  \item If $e$ is a replacement edge, say $e=f_{Z}$ for some connected component $Z$ of $G-A$, then observe that there is a unique $2$-connected component $C$ of $G_{\tcc}$ that is a cycle containing~$A$ and for which $Z=Z^A(C)$. Then set $\eta^A(e)=\alpha(r_C)$.
 \end{itemize}
 With this notation in place, Claim~\ref{cl:traces} immediately gives the following characterization of adhesions in $\T$.
 
 \begin{claim}\label{cl:adhesions}
  Let $e$ be an edge of $T$. Then:
  \begin{itemize}[nosep]
  \item If $e=\gamma(g)$ for some edge $g\in S$, say belonging to a $2$-connected component $C$ of $G_{\tcc}$ that is a cycle, then
  $$\adh_{\T}(e)=\{\alpha(g),\alpha(r_C)\}.$$
  \item If $e\in E(T^A)$ for some $3$-edge-connected component $A$, then
  $$\adh_{\T}(e)=\eta^A\left(\adh_{\T^A}(e)\right).$$
  \end{itemize}
 \end{claim}

 From the first point of Claim~\ref{cl:adhesions} it follows that all edges of $T$ that originate from the spanning forest $S$ are thin in $\T$, hence they do not contribute to the adhesions of the nodes of $\T$. Then, from the second point of Claim~\ref{cl:adhesions} we observe that $|\adh_{\T}(s)|\leq |\adh_{\T^A}(s)|$ for every node $s$ of $T$ that originates from $T^A$. Since $\adhw(\T^A)\leq a$ for every $3$-edge-connected component $A$, it follows that $\adhw(\T)\leq a$.
\end{proof}

\subsection{From the non-existence of a decomposition to a bramble}

We are now ready to prove the last implication of Theorem~\ref{thm:trinity}. The proof closely follows the line of argumentation for the treewidth case presented by Diestel in~\cite{diestel2012graph}, which in turn is based on a proof by Mazoit~\cite{mazoit-brambles}.

\begin{proof}[Proof of Theorem~\ref{thm:trinity}, \ref{s:decomp}$\Rightarrow$\ref{s:bramble}]
	Assume that $G$ has no tree-cut decomposition of adhesion-width $<a$ and bag-width $<b$.
	We deduce by Theorem~\ref{thm:torsos-width} that there exists a $3$-edge-connected component $A$ of $G$ such that every tree-cut decomposition $\T$ of $\torso(A)$ satisfies at least one of the conditions: $\adhw(\T)\geq a$ or $\bagw(\T)\geq b$.
	We will construct a suitable bramble using the component $A$.
	Denote $G_A\coloneqq \torso(A)$ for brevity.

	A tree-cut decomposition $\T=(T,\X)$ of $G_A$ shall be called \emph{good} if for every node $t$, we~have 
	\begin{align*}
		&\adh_\T(t)<a, \textrm{and}\\
		&\textrm{if $|X_t|\geq b$, then $t$ is a leaf of $T$.}
	\end{align*}
	In other words, a good tree-cut decomposition has adhesion-width $<a$, and the only nodes whose bags are allowed to be of size $\geq b$ are the leaves. 
	Clearly, there always exists a good tree-cut decomposition of $G_A$, e.g., the one consisting of a single node, whose bag contains all the vertices of $G_A$.
	If $\T=(T,\X)$ is a good tree-cut decomposition and a leaf $t$ of $T$ satisfies $|X_t|\geq b$, then $X_t$ shall be called a \emph{petal} of $\T$.
	Note that the assumption that $G_A$ has no tree-cut decomposition of adhesion-width $<a$ and bag-width $<b$ implies that every good tree-cut decomposition of $G_A$ has a petal.
	
	Our first goal is to construct a bramble $\B_A$ of adhesion-order $\geq a$ and bag-order $\geq b$ in~$G_A$. For this, let $\M\subseteq 2^A$ be the family of all petals of all good tree-cut decompositions of $G_A$. Further, let $\F\subseteq \M$ be an inclusion-wise minimal subfamily of $\M$ satisfying the following two conditions:
	\begin{enumerate}[label=(\roman*),ref=(\roman*),leftmargin=*,nosep]
	 \item\label{p:hits} For each good tree-cut decomposition $\T$ of $G_A$, $\F$ contains at least one petal of $\T$.
	 \item\label{p:upward} $\F$ is {\em{upward-closed}}: if $C,D\in \M$ are such that $C\subseteq D$ and $C\in \F$, then also $D\in \F$.
	\end{enumerate}
    We observe the following.
    
    \begin{claim}\label{cl:witness}
     Suppose $C$ is an inclusion-wise minimal element of $\F$. 
     Then there exists a good tree-cut decomposition $\T^C$ of $G_A$ such that $C$ is a petal of $\T^C$, and moreover $C$ is the only petal of $\T^C$ that belongs to $\F$.
    \end{claim}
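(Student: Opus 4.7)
The plan is to exploit the inclusion-wise minimality of $\F$ by considering the smaller family $\F' \coloneqq \F \setminus \{C\}$. Since $\F$ was chosen inclusion-wise minimal subject to conditions~\ref{p:hits} and~\ref{p:upward}, the family $\F'$ must fail at least one of these two conditions. The whole argument amounts to pinning down which condition fails, and reading off the required decomposition from that failure.

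First I would verify that $\F'$ still satisfies~\ref{p:upward}. Suppose for contradiction that there exist $C' \in \F'$ and $D \in \M$ with $C' \subseteq D$ and $D \notin \F'$. Since $\F$ itself is upward-closed and contains $C'$, we have $D \in \F$, so $D \notin \F'$ forces $D = C$. But then $C' \subseteq C$ with $C' \in \F$ and $C' \neq C$, contradicting the inclusion-wise minimality of $C$ within $\F$. Hence $\F'$ remains upward-closed.

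Consequently, $\F'$ must fail~\ref{p:hits}: there exists a good tree-cut decomposition $\T^C$ of $G_A$ none of whose petals lie in $\F'$. On the other hand, by~\ref{p:hits} applied to $\F$ itself, the decomposition $\T^C$ must have at least one petal in $\F$. Any such petal lies outside $\F'=\F\setminus\{C\}$, so it can only be $C$. This simultaneously gives that $C$ is a petal of $\T^C$ and that $C$ is the unique petal of $\T^C$ that lies in $\F$, which is exactly what the claim asserts.

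I do not expect any substantial obstacle in this particular step; it is essentially a short pigeonhole consequence of the minimality of $\F$. The genuine difficulty of the subsection still lies ahead, namely in assembling the witnessing decompositions $\T^C$ for the minimal elements $C \in \F$ into a bramble $\B_A$ of adhesion-order $\geq a$ and bag-order $\geq b$, and then lifting such a bramble from the torso $G_A=\torso(A)$ back to the original graph $G$.
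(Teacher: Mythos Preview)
Your proof is correct and follows exactly the same approach as the paper's: remove $C$ from $\F$, observe that upward-closure survives by the minimality of $C$, and conclude that condition~\ref{p:hits} must fail, yielding the desired decomposition $\T^C$. The paper's version is simply more terse, asserting the upward-closure of $\F\setminus\{C\}$ without spelling out the contradiction you give in your second paragraph.
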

    \begin{proof}
     Observe that since $C$ is an inclusion-wise minimal element of $\F$, the family $\F\setminus \{C\}$ is upward-closed, that is, satisfies~\ref{p:upward}. As $\F$ is inclusion-wise minimal subject to satisfying both~\ref{p:upward} and~\ref{p:hits}, it follows that $\F\setminus \{C\}$ does not satisfy~\ref{p:hits}, which implies the claim.
    \cqed\end{proof}

    The next claim is the key observation.
	
	\begin{claim}\label{cl:bramble}
	 The elements of $\F$ pairwise intersect.
	\end{claim}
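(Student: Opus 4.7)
The plan is to suppose for contradiction that some $C,D\in\F$ satisfy $C\cap D=\emptyset$ and derive a contradiction by exhibiting a good tree-cut decomposition of $G_A$ whose every petal lies outside $\F$, contradicting property~\ref{p:hits}. Without loss of generality we may choose both $C$ and $D$ to be inclusion-wise minimal in $\F$; applying Claim~\ref{cl:witness} to each, we obtain good decompositions $\T^C=(T^C,\X^C)$ and $\T^D=(T^D,\X^D)$ of $G_A$ in which $C$ and $D$ are respectively the unique $\F$-petal.

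The construction I have in mind is a surgery on $\T^C$: let $t$ be the leaf of $T^C$ with $X^C_t=C$, remove $t$, and graft a copy of $T^D$ onto the former neighbor of $t$ by adding a single edge from that neighbor to some chosen node $s_0\in V(T^D)$; each node $s$ of this grafted subtree receives bag $X^D_s\cap C$. Call the resulting decomposition $\T^\ast$. Its bags form a near-partition of $V(G_A)$, and internal bags remain of size less than $b$ (using $|X^D_s\cap C|\leq|X^D_s|<b$ whenever $s$ is not a leaf of $T^D$, while internal bags of $T^C\setminus\{t\}$ were already small). The two sources of potential petals are handled as follows. A leaf inherited from $T^C\setminus\{t\}$ gives a petal of $\T^C$ different from $C$, so not in $\F$ by uniqueness. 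A leaf $s$ inherited from $T^D$ gives a bag $X^D_s\cap C$; if this bag equalled $C$ then $X^D_s\supseteq C$, which by upward-closure would put $X^D_s$ into $\F$, yielding a $\T^D$-petal in $\F$ distinct from $D$ (note $X^D_s\neq D$ since $C\cap D=\emptyset$) and contradicting the uniqueness guaranteed by Claim~\ref{cl:witness}. Hence $X^D_s\cap C\subsetneq C$, and by minimality of $C$ in $\F$ this bag is not in $\F$.

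The main obstacle is verifying the adhesion-width bound $\adhw(\T^\ast)<a$. Edges of $\T^\ast$ inherited from $T^C\setminus\{t\}$ induce the same separations of $V(G_A)$ as in $\T^C$ and so carry the same adhesions; the new gluing edge carries exactly $\adh_{\T^C}(tt')$; but each edge of the grafted $T^D$-subtree induces a separation whose cutset is contained in its original $\T^D$-adhesion together with a portion of $\adh_{\T^C}(tt')$. A priori this may convert thin adhesions of $\T^D$ into bold ones in $\T^\ast$, so the naive bound on the bold-adhesion union at a grafted node could exceed $a-1$. Circumventing this is the technically heavy part of the argument and will require, in the spirit of Mazoit's bramble construction~\cite{mazoit-brambles}, a careful choice of $s_0$ and, if necessary, a preliminary massaging of $\T^D$ (for instance by rerouting along the path toward $D$'s leaf) so that the extra edges contributed by $\adh_{\T^C}(tt')$ are absorbed into the already-bold adhesions of $\T^D$, keeping the node-adhesion budget $<a$ intact throughout the grafted subtree.
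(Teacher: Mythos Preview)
Your overall plan---assume disjoint minimal $C,D\in\F$, invoke Claim~\ref{cl:witness} to obtain witnesses $\T^C,\T^D$, glue them into a single good decomposition none of whose petals lies in $\F$---is exactly the paper's plan, and your petal analysis via minimality and upward-closure is essentially the paper's as well. The gap is precisely where you locate it: the adhesion-width bound for the grafted part. As written, your construction intersects the bags of $\T^D$ with $C$ itself; but then an edge $ss'$ of the grafted $T^D$ carries, in $\T^\ast$, its old $\T^D$-adhesion restricted to $G_A[C]$ \emph{plus} a portion of $\delta(C)$. A thin edge of $\T^D$ can therefore become bold in $\T^\ast$, and a node $s$ with many formerly-thin incident edges can accumulate their (up to two each) original adhesion edges into $\adh_{\T^\ast}(s)$. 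Since the number of thin-to-bold conversions at $s$ is only bounded by $|\delta(C)|<a$, one gets a bound of order $a$ on the number of such conversions but of order $2a$ on the extra edges they contribute---so the node adhesion is controlled only up to an additive $O(a)$, not strictly below $a$. Your suggestions of choosing $s_0$ cleverly or ``massaging'' $\T^D$ do not, as stated, overcome this.

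The paper supplies the missing idea: rather than intersecting with $C$, one first applies Menger's theorem to obtain a maximum family $\Pp$ of edge-disjoint $C$--$D$ paths and a separation $\{\wh{C},\wh{D}\}$ with $|\delta(\wh{C})|=|\Pp|$, and then builds the glued decomposition \emph{symmetrically}: remove both leaves $t^C,t^D$, join their neighbours by a single edge, and set the bag of a node from $T^C$ to $X^C_t\cap\wh{D}$ and of a node from $T^D$ to $X^D_t\cap\wh{C}$. The point of the Menger equality is that every edge of $G_A$ crossing $\{\wh{C},\wh{D}\}$ lies on a unique path of $\Pp$, which lets one define partial maps $\eta^C,\eta^D\colon E(G_A)\rightharpoonup E(G_A)$ (sending an edge on some $P\in\Pp$ to the unique edge of $P$ in $\delta(\wh{C})$) such that $\adh_{\T}(t)\subseteq \eta^D(\adh_{\T^C}(t))$ for nodes from $T^C$, and symmetrically for $T^D$. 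This gives $|\adh_{\T}(t)|\leq|\adh_{\T^C}(t)|<a$ (resp.\ $<a$ via $\T^D$) directly, with no thin-to-bold leakage to worry about. This Menger step is the analogue of the key step in Mazoit's treewidth argument; invoking the reference without carrying it out is where your proposal falls short.
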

	\begin{proof}
	 Suppose otherwise: there exist sets $C,D\in \F$ that are disjoint. By possibly replacing each of $C$ and $D$ by its subset, we may assume that $C$ and $D$ are inclusion-wise minimal elements of $\F$. By Claim~\ref{cl:witness}, there exist good tree-cut decompositions $\T^C=(T^C,\X^C)$ and $\T^D=(T^D,\X^D)$ of $G_A$ such that $C$ is the only petal of $\T^C$ that belongs to $\F$ and $D$ is the only petal of $\T^D$ that belongs to $\F$. Let $t^C$ be the leaf of $T^C$ whose bag is $C$, and define $t^D$ analogously.
	 
	 Let $\Pp$ be a maximum-size family of edge-disjoint paths in $G_A$ connecting $C$ with $D$. By Menger's theorem, there exists a separation $\{\wh{C},\wh{D}\}$ of $G_A$ such that $|\delta(\wh{C})|=|\delta(\wh{D})|=|\Pp|$. In particular, every path $P\in \Pp$ contains exactly one edge in $\delta(\wh{C})$, all the vertices on $P$ before this edge belong to $\wh{C}$, and all the vertices on $P$ after this edge belong to $\wh{D}$.
	This is illustrated in Figure~\ref{fig:menger}.

	\begin{figure}
	  \center
	  \includegraphics[scale=1.1]{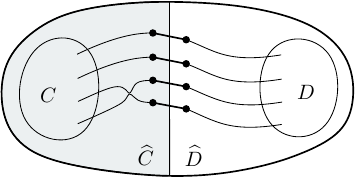}
	  \caption{A separation $\{\wh{C},\wh{D}\}$ of $G_A$ such that $|\delta(\wh{C})|=|\delta(\wh{D})|=|\Pp|$. Edges in $\delta(\wh{C})$ are depicted in bold.}\label{fig:menger}
	\end{figure}

	 We now construct a decomposition $\T=(T,\X)$ of $G_A$ as follows. Construct $T$ by taking the disjoint union of $T^C$ and $T^D$, removing the nodes $t^C$ (from $T^C$) and $t^D$ (from $T^D$), and adding the edge $s^Cs^D$, where $s^C$ is the unique neighbor of $t^C$ in $T^C$ and $s^D$ is the unique neighbor of $t^D$ in $T^D$. (Note here that $s^C$ and $s^D$ exist, as otherwise either $C=A$ or $D=A$, implying that the other one is empty, but every element of $\F$ has size at least $b\geq 2$.) Next, define the bags $\X=\{X_t\colon t\in V(T)\}$ as follows: for every node $t$ originating from $T^C$ we set $X_t\coloneqq X^C_t\cap \widehat{D}$, and for every node $t$ originating from $T^D$ we set $X_t\coloneqq X^D_t\cap \widehat{C}$. Thus, $\{X_t\colon t\in V(T^C)\setminus \{t^C\}\}$ is a near-partition of $\widehat{D}$ and $\{X_t\colon t\in V(T^D)\setminus \{t^D\}\}$ is a near-partition of $\widehat{C}$, implying that $\X$ is a near-partition of $A$. So $\T$ is a tree-cut decomposition of $G_A$.
	 
	 We now bound the adhesion-width of $\T$. Similarly as before, for an edge $e$, the edge set of the trace of $e$ in $\T$ is denoted by $\trace_\T(e)$; similarly for decompositions $\T^C$ and $\T^D$.
     Observe that for every edge $e$ of $G$, say with endpoints $u$ and $v$, the trace of $e$ in $\T$ can be characterized as follows.
	 \begin{itemize}[nosep]
	  \item If $u,v\in \wh{C}$, then $\trace_{\T}(e)=\trace_{\T^D}(e)$.
	  \item If $u,v\in \wh{D}$, then $\trace_{\T}(e)=\trace_{\T^C}(e)$.
	  \item Suppose $u\in \wh{C}$ and $v\in \wh{D}$. Then there exists a path $P\in \Pp$ such that $e$ lies on $P$. Let $P^C$ be the prefix of $P$ consisting of edges with at least one endpoint in $\wh{C}$, and $P^D$ be the suffix of $P$ consisting of edges with at least one endpoint in $\wh{D}$ (thus, $E(P^C)\cap E(P^D)=\{e\}$). Then
	 \end{itemize}
	$$\trace_{\T}(e)\subseteq \left(\bigcup_{f\in E(P^D)}\trace_{\T^C}(f)\setminus \{t^Cs^C\}\right)\cup \left(\bigcup_{f\in E(P^C)}\trace_{\T^D}(f)\setminus \{t^Ds^D\}\right)\cup \{s^Cs^D\}.$$
    Let $\eta^C\colon E(G_A)\rightharpoonup E(G_A)$ be a partial function defined as follows:
    \begin{itemize}[nosep]
     \item each edge $e$ with both endpoints in $\wh{C}$ is mapped to itself;
     \item each other edge $e$ is not in the domain of $\eta^C$, unless it belongs to some path $P\in \Pp$, in which case it is mapped to the unique edge of $P$ with one endpoint in $\wh{C}$ and the other in $\wh{D}$.
    \end{itemize}
    Define a partial function $\eta^D\colon E(G_A)\rightharpoonup E(G_A)$ symmetrically using $\wh{D}$ instead of $\wh{C}$. Then from the above characterization of traces it follows that:
    \begin{itemize}[nosep]
     \item For each node $t\in V(T^C)$, we have
     $$\adh_{\T}(t)\subseteq \eta^D\left(\adh_{\T^C}(t)\right).$$
     \item For each node $t\in V(T^D)$, we have
     $$\adh_{\T}(t)\subseteq \eta^C\left(\adh_{\T^D}(t)\right).$$
    \end{itemize}
    Since both $\T^C$ and $\T^D$ have adhesion-width $<a$, it follows that $\adhw(\T)<a$.
    
    Finally, observe that every bag in $\T$ is a subset of a bag originating either from $\T^C$ or from~$\T^D$. Since non-leaf nodes of $\T$ originate from non-leaf nodes of $\T^C$ and $\T^D$, it follows that only the leaves of $\T$ may have bags of size $\geq b$, hence $\T$ is good. Further, for every leaf $t$ of $\T$, the bag at $t$ in $\T$ is either a subset of a leaf bag in $\T^C$ other than $C$, or a subset of a leaf bag in $\T^D$ other than $D$. Since $C$ and $D$ are the only petals of $\T^C$ and $\T^D$, respectively, that belong to $\F$, and $\F$ is upward-closed, it follows that $\T$ has no petals that belong to $\F$. This is a contradiction with property~\ref{p:hits} of $\F$.
	\cqed\end{proof}

    We can now define the bramble $\B_A$ as follows: for each connected subgraph $H$ of $G_A$ such that $V(H)\in \F$, include the slab $(H,V(H))$ in $\B_A$. Note that since $G_A$ is $3$-edge-connected by Lemma~\ref{lem:torso-tcc}, these are indeed slabs in $G_A$. Further, Claim~\ref{cl:bramble} shows that $\B_A$ is a bramble. We now verify the orders of $\B_A$.
    
    \begin{claim}
     $\B_A$ has adhesion-order $\geq a$.
    \end{claim}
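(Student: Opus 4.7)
The plan is to argue by contradiction. Suppose $F \subseteq E(G_A)$ is a disconnecting set for $\B_A$ with $|F| < a$. I will build from $F$ a good tree-cut decomposition of $G_A$ whose petals all lie outside $\F$, contradicting property~\ref{p:hits} of $\F$.

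Concretely, let $Z_1, \ldots, Z_k$ be the connected components of $G_A - F$, and construct $\T = (T, \X)$ by letting $T$ be a star with center $r$ and leaves $t_1, \ldots, t_k$, with $X_r = \emptyset$ and $X_{t_i} = V(Z_i)$. Since the sets $V(Z_1), \ldots, V(Z_k)$ partition $A = V(G_A)$, this is a tree-cut decomposition. For each $i$, the adhesion $\adh_{\T}(rt_i)$ is exactly the set of edges of $G_A$ with one endpoint in $V(Z_i)$ and the other outside, and this is contained in $F$. Thus $|\adh_{\T}(t)| \leq |F| < a$ for every node $t$ of $T$. Since $|X_r| = 0 < b$ (using $b \geq 2$) while every other node is a leaf of $T$, the decomposition $\T$ is good.

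The standing assumption that $G_A$ admits no tree-cut decomposition of adhesion-width $< a$ and bag-width $< b$ now forces $\T$ to have at least one petal, that is, some $V(Z_i)$ with $|V(Z_i)| \geq b$. Property~\ref{p:hits} then gives an index $i$ with $V(Z_i) \in \F$. The connected subgraph $Z_i$ of $G_A$ together with its vertex set forms a slab $(Z_i, V(Z_i)) \in \B_A$. However, $Z_i$ is a subgraph of $G_A - F$, so $F \cap E(Z_i) = \emptyset$ and $Z_i$ remains connected after removing $F$, meaning $F$ does not disconnect $(Z_i, V(Z_i))$. This contradicts the assumption that $F$ is a disconnecting set for $\B_A$. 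The only subtlety worth double-checking is that the star decomposition really qualifies as good---in particular that the adhesion at the central node $r$, being the union of the adhesions across all bold incident edges, is still contained in the single cut $F$---but this is immediate from the definition of the $Z_i$'s.
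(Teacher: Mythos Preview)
Your proof is correct and follows essentially the same approach as the paper: both construct the star-shaped tree-cut decomposition whose leaf bags are the connected components of $G_A-F$, verify it is good, and invoke property~\ref{p:hits} to obtain a component $Z_i$ with $V(Z_i)\in\F$, yielding a slab $(Z_i,V(Z_i))\in\B_A$ that $F$ fails to disconnect. The only cosmetic difference is that you phrase it as a contradiction while the paper argues directly that every $F$ with $|F|<a$ leaves some slab intact.
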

    \begin{proof}
     Consider any set of edges $F\subseteq E(G_A)$ satisfying $|F|<a$. Construct a tree-cut decomposition $\T^F$ of $G_A$ as follows. For each connected component $D$ of $G_A-F$, construct a node $t^D$ with $V(D)$ as its bag. Finally, construct a root node $r$ with an empty bag and make it adjacent to all the nodes $t^D$. (Thus, the tree underlying $\T^F$ is a star with $r$ being the center.) Since the adhesions of all the nodes are contained in $F$, it follows that $\adhw(\T^F)<a$. Further, every node other than $r$ is a leaf, and $r$'s bag is empty, so we conclude that $\T^F$ is a good tree-cut decomposition of $G_A$. By property~\ref{p:hits} of $\F$, there is a connected component $D$ of $G_A-F$ such that $V(D)\in \F$. Observe that the slab $(D,V(D))$ has been included in $\B_A$ and $E(D)\cap F=\emptyset$, so in particular this slab is not disconnected by $F$. Since $F$ was chosen arbitrarily, we conclude that $\B_A$ has no disconnecting set of size $<a$. 
    \cqed\end{proof}

    \begin{claim}
     $\B_A$ has bag-order $\geq b$.
    \end{claim}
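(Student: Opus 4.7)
The plan is to observe that this final claim is essentially tautological from the definitions. Every element of $\F$ lies in $\M$, which by definition is the family of petals of good tree-cut decompositions of $G_A$. A petal, in turn, was defined as a leaf bag $X_t$ of a good tree-cut decomposition with $|X_t|\geq b$. Hence every $V(H)\in \F$ automatically satisfies $|V(H)|\geq b$.

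I would then unwind the definition of $\B_A$: its slabs are exactly the pairs $(H,V(H))$ with $H$ a connected subgraph of $G_A$ and $V(H)\in \F$. By the preceding paragraph, the core $V(H)$ of any such slab has size at least $b$. Taking the minimum over all slabs gives bag-order at least $b$, which is the desired conclusion.

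There is no real obstacle here; the work was done when we set up the definitions of $\M$, $\F$, and $\B_A$ so that membership in $\F$ forces the core to be a petal of size $\geq b$. I would therefore present the argument as a one-line appeal to the definitions, completing the construction of a bramble of adhesion-order $\geq a$ and bag-order $\geq b$ in $G_A$ (and hence in $G$, since cores that are $3$-edge-connected in $G_A$ are $3$-edge-connected in $G$ by Lemma~\ref{lem:torso-tcc} combined with the fact that $A$ is a $3$-edge-connected component of $G$), thereby finishing the implication \ref{s:decomp}$\Rightarrow$\ref{s:bramble} and the proof of Theorem~\ref{thm:trinity}.
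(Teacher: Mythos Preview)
Your proof is correct and follows exactly the same reasoning as the paper: both simply note that every element of $\F\subseteq\M$ is by definition a petal, hence has size at least $b$, so every core $V(H)$ of a slab in $\B_A$ has $|V(H)|\geq b$. (Your final parenthetical about transferring to $G$ goes beyond the scope of this claim and slightly understates what is needed---the paper actually modifies the slabs by replacing replacement edges with paths---but that is handled separately after this claim and does not affect the correctness of your argument here.)
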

    \begin{proof}
     It suffices to note that every element of $\F$ is a petal of some good tree-cut decomposition of $G_A$, and hence has size $\geq b$.
    \cqed\end{proof}

    Now that the bramble $\B_A$ in $G_A$ is constructed, we can modify it to obtain a bramble $\B$ in $G$. Consider any slab $(H,K)\in \B_A$ and recalling that $H$ is a subgraph of $G_A$, construct a subgraph $H'$ of $G$ from $H$ as follows: for every replacement edge present in $H$, say edge $f_Z$ for some connected component $Z$ of $G-A$, replace $f_Z$ by an arbitrary path connecting the endpoints of $f_Z$ that has all internal vertices in $Z$. Note that $H'$ remains connected and $K$, as a subset of $A$, is $3$-edge-connected in $G$. Hence $(H',K)$ is a slab in $G$. We define $\B$ as the set of all slabs $(H',K)$ obtained from slabs $(H,K)\in \B_A$ as described above.
    
    Since the cores of slabs did not change, $\B$ is a bramble in $G$ and its bag-order is the same as that of $\B_A$, which is $\geq b$. To see that the adhesion-order of $\B$ is not smaller than that of $\B_A$, observe the following: if $F'\subseteq E(G)$ is a disconnecting set for $\B$, then replacing every edge of $F'$ with an endpoint outside of $A$, say in a component $Z$ of $G-A$, with the replacement edge~$f_Z$, turns $F'$ into a disconnecting set $F$ for $\B$ such that $|F|\leq |F'|$.
	Therefore, we conclude that $\B$ is a bramble in $G$ of adhesion-order $\geq a$ and bag-order $\geq b$.
\end{proof}

\section{Cops, dogs, and robber game}\label{sec:game}

In this section we use the equivalence provided by Theorem~\ref{thm:trinity} to give a characterization of ab-tree-cut width expressed in terms of an analogue of the cops and robber game, which we call the {\em{cops, dogs, and robber game}}.

The game is played on a graph $G$ by two players, one controlling cops and dogs, and the other controlling the robber. There are two parameters of the game:
\begin{itemize}[nosep]
 \item the number of cops $a$; and
 \item the number of dogs $b$.
\end{itemize}
The game starts with the robber player placing the robber at some vertex $r_0$.
At all times, the cops occupy a set $F$ consisting of at most $a$ edges of $G$. This set is initially empty, that is, $F_0=\emptyset$. Then the players proceed in rounds. Each round $i\in \{1,2,3,\ldots\}$, consists of the following steps: 
\begin{itemize}[nosep]
 \item The cop player announces a set of edges $F_i$ of size $\leq a$ to which the cops will move in this round.
 \item The robber player moves the robber from vertex $r_{i-1}$ to any vertex $r_i$ which is reachable from $r_{i-1}$ by a path in $G$ which does not pass through any edge occupied by a cop that does not move, that is, a path that does not intersect $F_{i-1}\cap F_i$. However, we also require that $r_i$ and $r_{i-1}$ are $3$-edge-connected in $G$.
 \item The cops execute the announced move.
\end{itemize}
Note that so far the dogs do not get to play, but they are important in the winning condition. Namely, after every round the players verify to how many vertices the robber could potentially move, that is, how many vertices of $G$ are $3$-edge-connected with $r_i$ and can be reached from $r_i$ by a path that avoids the edges of $F_i$. If this number is $\leq b$, then the cop player can unleash the $b$ dogs on those vertices and immediately catch the robber, thus winning the game. The robber player wins the game if she can avoid getting caught indefinitely. 

Note that the restriction about $3$-edge-connectedness of the moves essentially means that the robber is confined to the $3$-edge-connected component of $G$ to which $r_0$ belongs.

We say that a graph $G$ is {\em{searchable}} by $a$ cops and $b$ dogs, if there exists a strategy for the cop player to win the game using $a$ cops and $b$ dogs.
The main result of this section is the following equivalence.

\begin{theorem}\label{thm:copsandrobber}
Let $G$ be a graph and $a,b$ be positive integers. Then $G$ is searchable by $<a$ cops and $<b$ dogs if and only if $G$ has a tree-cut decomposition of adhesion-width $<a$ and bag-width~$<b$.
\end{theorem}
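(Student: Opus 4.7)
The plan is to prove both directions using Theorem~\ref{thm:trinity} as a bridge: the forward direction by an explicit cop strategy extracted from a tree-cut decomposition, the backward direction by contrapositive, converting a bramble into an escape strategy for the robber.

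For the forward direction, I will fix a tree-cut decomposition $\T=(T,\X)$ with $\adhw(\T)<a$ and $\bagw(\T)<b$, root $T$ at an arbitrary node $r$, and describe a cop strategy that descends $T$ along a root-to-leaf path. In each round, the cops maintain a \emph{target node} $t$, occupy the edge set $\adh(t)$ of size $<a$, and the target descends one level per round starting from $t=r$ in round~$1$. If, after the cops move, the robber's position $r_i$ lies in $X_t$, then the bag $X_t$---of size $<b$---is the whole set of potential positions and the $<b$ dogs catch her. Otherwise $r_i$ lies in the subtree $T_c$ rooted at a unique child $c$ of $t$, and the new target is $c$. The key combinatorial fact is the containment $\adh(tt')\subseteq\adh(t)\cap\adh(t')$ for every bold tree-edge $tt'$: when the cops migrate from $\adh(t)$ to $\adh(c)$, the persistent set $\adh(t)\cap\adh(c)$ still contains $\adh(tc)$ if $tc$ is bold, while for thin $tc$ the $3$-edge-connectedness constraint already forbids the robber from crossing the bottleneck $\adh(tc)$ of size at most~$2$. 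The invariant that the robber stays inside $V_t:=\bigcup_{s\in V(T_t)}X_s$ is thus preserved, and since $t$ strictly descends, after at most the depth of $T$ rounds $t$ becomes a leaf $\ell$ with $V_\ell=X_\ell$, and the dogs win.

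For the backward direction, I will prove the contrapositive. Assuming $G$ admits no tree-cut decomposition of the stated widths, Theorem~\ref{thm:trinity} yields a bramble $\B$ of adhesion-order $\geq a$ and bag-order $\geq b$, from which I construct a robber strategy maintaining the invariant: after round $i$, her position $r_i$ lies in the core $K_i$ of some slab $(H_i,K_i)\in\B$ not disconnected by $F_i$. She initialises by placing herself at any vertex of any slab's core. To play round $i+1$: since $|F_{i+1}|<a$ is strictly below the adhesion-order of $\B$, there exists a slab $(H_{i+1},K_{i+1})\in\B$ not disconnected by $F_{i+1}$; by the touching property we have $w\in K_i\cap K_{i+1}$, and she sets $r_{i+1}:=w$. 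A legal path from $r_i$ to $w$ inside $H_i$ avoiding $F_i\cap F_{i+1}$ exists because the invariant places $K_i$ in a single connected component of $H_i$ after removing $F_i\cap E(H_i)$, and hence also after removing only the smaller set $(F_i\cap F_{i+1})\cap E(H_i)$; the $3$-edge-connectedness requirement on the move holds since $r_i,w\in K_i$ and $K_i$ is $3$-edge-connected in $G$. The dogs are defeated because all of $K_{i+1}$---of size $\geq b$---lies among the robber's potential positions: every vertex of $K_{i+1}$ is reachable from $r_{i+1}$ by a path in $H_{i+1}$ avoiding $F_{i+1}$, and $K_{i+1}$ is $3$-edge-connected in~$G$.

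The main technical subtlety I anticipate is in the forward direction: thin tree-edges contribute nothing to $\adh(t)$, so the cops never actually occupy them, and it is solely the $3$-edge-connectedness constraint on the robber's moves that must stop her from slipping across them. Verifying that this constraint precisely complements the accounting encoded in the definition of $\adh(t)$---in particular during the migration from $\adh(t)$ to $\adh(c)$, when only the persistent cops $\adh(t)\cap\adh(c)$ form a barrier---is the heart of the descent argument, though each individual verification reduces to a direct unfolding of definitions.
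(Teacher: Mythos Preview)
Your proposal is correct and follows essentially the same approach as the paper's proof: one direction descends a rooted tree-cut decomposition with the cops occupying $\adh(t)$ and using the containment $\adh(tc)\subseteq\adh(t)\cap\adh(c)$ for bold tree-edges (with the $3$-edge-connectedness rule handling thin edges), while the other direction uses the bramble supplied by Theorem~\ref{thm:trinity} to maintain the invariant that the robber sits in the core of a slab not disconnected by the current cop placement. The only differences are cosmetic---you label the two directions oppositely to the paper, and you do not separately dispose of the degenerate case $b=1$---but the substance is identical.
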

\begin{proof}
	First, observe again that if $b=1$, then both statements trivially do not hold for every positive integer $a$. Therefore, we shall assume that $b \geq 2$.
	
	We show the left-to-right implication by proving its contrapositive.
	Assume that $G$ has no tree-cut decomposition with adhesion-width $<a$ and bag-width $<b$. 
	We describe a winning strategy for the robber player to win against $<a$ cops and $<b$ dogs.
	
	By Theorem~\ref{thm:trinity}, in $G$ there exists a bramble $\B$ of adhesion-order $\geq a$ and bag-order $\geq b$. The robber player will maintain the following invariant: if at the end of round $i$ the robber is placed at vertex $r_i$, then there is a slab $(H_i,K_i)\in \B$ such that $r_i\in K_i$ and $(H_i,K_i)$ is not disconnected by $F_i$ (the set of edges occupied by the cops at the end of round $i$). Since $F_0=\emptyset$, to have this invariant satisfied at the start of the game, it suffices that the robber player chooses $r_0$ to be any vertex from the core of any slab $(H_0,K_0)\in\B$. 
	
	We now explain how the invariant is maintained in round $i$ of the game.
	When the cop player announces the new set $F_i$ to which the cops will move, the robber chooses any slab $(H_i,K_i)$ that is not disconnected by $F_i$.
	Such a slab exists by the assumption that the adhesion-order of $\B$ is at least $a$ and $b\geq 2$. Since $\B$ is a bramble, the cores $K_{i-1}$ and $K_i$ intersect. The robber player can therefore choose any vertex $r_i\in K_{i-1}\cap K_i$. As the invariant was satisfied in round $i-1$, the set $F_{i-1}\cap F_i$ (in fact, even $F_{i-1}$) does not disconnect $r_{i-1}$ from $r_i$ within $H_{i-1}$. Since $r_{i-1},r_i\in K_{i-1}$ and $K_{i-1}$ is $3$-edge-connected in $G$, we conclude that it is allowed for the robber to move from $r_{i-1}$ to $r_i$, and this move is duly executed by the robber~player.
	
	To see that in this way the robber player can evade being caught indefinitely, observe that provided the invariant is maintained, after round $i$ the robber is allowed to move from $r_i$ to any vertex of $K_i$. As the bag-order of $\B$ is $\geq b$, we have $|K_i|\geq b$, hence $<b$ dogs are never sufficient to catch the robber if she follows the described strategy.
	
    
    We now proceed to the right-to-left implication. This amounts to describing a strategy for ${<a}$ cops and ${<b}$ dogs to search $G$, assuming that $G$ has a tree-cut decomposition $\T=(T,\X)$ satisfying $\adhw(\T)<a$ and $\bagw(\T)<b$.
    
    Let us root the tree $T$ in an arbitrary node, which naturally imposes an ancestor-descendant relation in $T$. After round $0$, when we have $F_0=\emptyset$, in rounds $1,2,3,\ldots$ the cop player will select nodes $t_1,t_2,t_3,\ldots$ so that each $t_{i+1}$ is a child of $t_{i}$, and play
    $$F_i\coloneqq \adh_{\T}(t_i).$$
    While doing this, the cop player will maintain the following invariant: after round $i$, either she has already won, or the robber must be placed at a vertex that belongs to a bag of a (strict) descendant of $t_i$. 
    
    The strategy for maintaining the invariant is simple. In round $1$ the cop player chooses $t_1$ to be the root of $T$, while in round $i\geq 2$ she chooses $t_i$ to be the child of $t_{i-1}$ such that $r_{i-1}$ belongs to the bag of either $t_{i-1}$ or any of its descendants. We now verify that the invariant is maintained after round $i$; we do this only for $i\geq 2$, as for $i=1$ the check is almost the same. For brevity, let $T_i$ be the subtree of $T$ rooted at $t_i$.
    
    First, observe that either the edge $t_it_{i-1}$ is thin in $\T$, or 
    $$\adh(t_it_{i-1})\subseteq \adh(t_i)\cap \adh(t_{i-1})=F_i\cap F_{i-1}.$$
    In either case, the robber cannot move from $r_{i-1}$ to any vertex $w$ outside of $\bigcup_{s\in V(T_i)} X_s$, because either $w$ and $r_{i-1}$ are not $3$-edge-connected in $G$, or every path connecting $r_{i-1}$ with $w$ intersects $F_i\cap F_{i-1}$. Hence, the robber player must choose $r_i\in \bigcup_{s\in V(T_i)} X_s$. However, if she chooses $r_i\in X_{t_i}$, then she immediately loses after this round: the set of vertices to which the robber can move once the cops are on $F_i$ would be confined to a subset of $X_{t_i}$, which is of size~$<b$. Hence, to avoid being captured the robber player must choose $r_i$ to be a vertex contained in a bag of a strict descendant of $t_i$, and the invariant is maintained.
    
    To see that following the strategy results in catching the robber, observe that eventually the cop player will chose $t_i$ to be a leaf of $T$. Then she wins, as there is no vertex at which the robber can be placed after this round.
\end{proof}

A graph is {\em{$k$-searchable}} if it is searchable by $k$ cops and $k$ dogs.  Theorem~\ref{thm:copsandrobber} then implies the following.

\begin{corollary}
 A graph has ab-tree-cut width $\leq k$ if and only if it is $k$-searchable.
\end{corollary}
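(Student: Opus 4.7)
The corollary is a direct reformulation of Theorem~\ref{thm:copsandrobber}, so the plan is simply to substitute $a=b=k+1$ into that theorem and unfold the definitions.

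More concretely, I would proceed as follows. By Definition~\ref{def:tctw-our}, the condition that $G$ has ab-tree-cut width $\leq k$ is equivalent to $G$ admitting a tree-cut decomposition $\T$ with $\adhw(\T)\leq k$ and $\bagw(\T)\leq k$, which is the same as $\adhw(\T)<k+1$ and $\bagw(\T)<k+1$. On the other hand, being $k$-searchable means, by the definition given just before the corollary, that $G$ is searchable by $k$ cops and $k$ dogs, i.e., by $<k+1$ cops and $<k+1$ dogs. Applying Theorem~\ref{thm:copsandrobber} with the parameters $a=k+1$ and $b=k+1$ then yields the equivalence of these two reformulated statements, which is exactly the content of the corollary.

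There is no real obstacle here: the only thing to double-check is the off-by-one bookkeeping between the strict inequalities $<a,<b$ appearing in Theorem~\ref{thm:copsandrobber} and the non-strict inequalities $\leq k$ appearing in both Definition~\ref{def:tctw-our} and in the notion of $k$-searchability. Once the substitution $a=b=k+1$ is made, both sides translate cleanly, and the corollary follows with no additional combinatorial work.
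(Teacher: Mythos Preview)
Your proposal is correct and matches the paper's treatment: the paper does not give an explicit proof of the corollary but simply states that it is implied by Theorem~\ref{thm:copsandrobber}, which is exactly the substitution $a=b=k+1$ you describe. The only bookkeeping is the passage between the strict inequalities in the theorem and the non-strict ones in Definition~\ref{def:tctw-our} and the definition of $k$-searchability, and you have handled this correctly.
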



\bibliographystyle{alpha}
\bibliography{main}

\end{document}